\documentclass[12pt]{amsart}

\usepackage[margin=1.2in, centering]{geometry}

\usepackage{cite}

\DeclareMathOperator{\diam}{diam}
\DeclareMathOperator{\acosh}{arccosh}

\usepackage{enumerate}

\usepackage[utf8]{inputenc}
\usepackage{amssymb,amscd,amsmath,enumerate,amsthm}
\usepackage[all]{xy}
\usepackage[utf8]{inputenc}
\usepackage[english]{babel}
\usepackage{tikz}
\usetikzlibrary{positioning,chains,fit,shapes,calc}
\usepackage{mathtools}
\usepackage[colorlinks,linkcolor=blue]{hyperref}
\usepackage{ytableau}

\theoremstyle{plain}
\newtheorem{thm}{Theorem}[section]

\newtheorem{prop}[thm]{Proposition}
\newtheorem{lemma}[thm]{Lemma}

\theoremstyle{remark}
\newtheorem{remark}{\bf \quad \itshape  Remark}

\theoremstyle{plain}
\newtheorem{conjecture}{\bf \quad   Conjecture}

\theoremstyle{definition}

%\numberwithin{equation}{subsection}
%\renewcommand{\qedsymbol}{$\blacksquare$}

\newcommand{\R}{{\mathbb{R}}}

\newcommand{\F}{{\mathbb{F}}}
\newcommand{\E}{{\mathbb{E}}}

\newcommand{\Z}{{\mathbb{Z}}}

\newcommand{\bH}{{\mathbb{H}}}

\newcommand{\SL}{\mathrm{SL}}

\newcommand{\PSL}{\mathrm{PSL}}

\newcommand{\Alt}{{\raise 2pt\hbox{$\scriptstyle\bigwedge$}}}

\definecolor{myblue}{RGB}{80,80,160}
\definecolor{mygreen}{RGB}{80,160,80}
\newdimen\nodeSize
\nodeSize=3mm
\newdimen\nodeDist
\nodeDist=7mm

\tikzset{
	position/.style args={#1:#2 from #3}{
		at=(#3.#1), anchor=#1+180, shift=(#1:#2)
	}
}

\allowdisplaybreaks

\title{Erd\H{o}s distinct distances in hyperbolic surfaces}

\author{Zhipeng Lu}

\author{Xianchang Meng }
\address{Mathematisches Institut,
	Georg-August Universit\"{a}t G\"{o}ttingen,
	Bunsenstra{\ss}e 3-5,
	D-37073 G\"{o}ttingen,
	Germany}
	
\email{zhipeng.lu@uni-goettingen.de}
	
\email{xianchang.meng@uni-goettingen.de}

\keywords{Erd\H{o}s distinct distances, hyperbolic surface, hyperbolic circle problem, equilateral dimension}
\subjclass[2010]{52C10, 11P21, 20H10}
%52B05

\date{}

\begin{document}

\maketitle

\begin{abstract}
    In this paper, we introduce the notion of ``geodesic cover" for Fuchsian groups, which summons copies of fundamental polygons in the hyperbolic plane to cover pairs of representatives realizing distances in the corresponding hyperbolic surface. Then we use estimates of geodesic-covering numbers to study the distinct distances problem in hyperbolic surfaces. Especially, for $Y$ from a large class of hyperbolic surfaces, we establish the nearly optimal bound $\geq c(Y)N/\log N$ for distinct distances determined by any $N$ points in $Y$,    where $c(Y)>0$ is some constant depending only on $Y$. In particular, for $Y$ being modular surface or standard regular of genus $g\geq 2$, we evaluate $c(Y)$ explicitly.
    We also derive  new sum-product type estimates.

\end{abstract}

\section{Introduction}
\subsection{Distinct distances problem in hyperbolic surfaces}
In 1946, Erd\H{o}s \cite{Erdos} posed the distinct distances problem which asks for the least number of distinct distances among any $N$ points in the Euclidean plane, and conjectured that it is $\sim N/\sqrt{\log N}$. Guth-Katz \cite{Guth-Katz} obtained the nearly optimal bound $\gtrsim N/\log N$ (we use the notation $f\gtrsim g$ to mean that there is an absolute constant $C>0$  with $f\geq Cg$). Erd\H{o}s also considered the higher dimensional generalization of the problem in $\R^d$ ($d\geq 3$) and conjectured the lower bound  $\gtrsim N^{2/d}$. For $d\geq 3$, Solymosi-Vu \cite{Solymosi-Vu} obtained the lower bound $\gtrsim N^{2/d-2/d(d+2)}$ by an induction on the dimension with the best known lower bound in the plane at that time as the base case. Combining the Guth-Katz bound with the induction of Solymosi-Vu, one may improve the lower
bounds of Solymosi-Vu for higher dimensional Euclidean spaces. For example when $d=3$, it gives the lower bound $\gtrsim N^{3/5-\epsilon}$ for any $\epsilon>0$, see Sheffer \cite{Sheffer} for details. There is also a continuous analogue of the problem in geometric measure theory, the Falconer's conjecture, asking about the lower bound of Hausdorff dimension of the sets in $\R^d$ for which the difference set has positive Lebesgue measure. Interested readers may check \cite{Falconer}, \cite{Guth-Iosevich-Ou-Wang}, \cite{Iosevich} etc. 
In addition to the Euclidean space, Erd\H{o}s-Falconer type problems have also been studied in vector spaces over finite fields and other spaces, see e.g. Bourgain-Katz-Tao  \cite{Bourgain-Tao}, Iosevich-Rudnev \cite{Iosevich-Rudnev},  Hart-Iosevich-Koh-Rudnev \cite{HIKR}, Tao \cite{Tao}, Rudnev-Selig \cite{Rudnev-Selig}, and Sheffer-Zahl \cite{Sheffer-Zahl} etc.

In the present paper, we establish lower bounds of distinct distances problem for a large class of hyperbolic surfaces. Hyperbolic surfaces as quotients of the hyperbolic plane $\bH^2$ by the action of Fuchsian groups, are locally isometric to $\bH^2$. By homogeneity, a lower bound for the distinct distances in $\bH^2$ bounds that of any hyperbolic surface from above. However, as geodesics may be complicatedly folded by the quotient of a Fuchsian group, it is not clear whether the nearly optimal lower bound as of Guth-Katz \cite{Guth-Katz} still holds for general hyperbolic surfaces. By studying actions of Fuchsian groups relatively explicitly and excavating a general notion of ``geodesic cover", we establish
\begin{thm}\label{thm-hyperbolic surfaces}
Assume $Y$ is the modular surface or a surface whose fundamental group is co-compact as a Fuchsian group. A set of $N$ points in $Y$ determines $\geq c(Y)N/\log N$ distinct distances for some constant $c(Y)>0$ depending only on $Y$. 
\end{thm}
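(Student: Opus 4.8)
The plan is to transfer the problem from the surface $Y=\bH^2/\Gamma$ back to the hyperbolic plane, where a Guth--Katz type mechanism is available, and to control the unavoidable loss caused by the quotient through the geodesic cover. The single analytic input I would isolate is a hyperbolic analogue of the Guth--Katz \emph{equal-distance quadruple bound}: for $N$ points in $\bH^2$ the number of ordered quadruples $(a,b,c,d)$ with $d_{\bH^2}(a,b)=d_{\bH^2}(c,d)$ is $\lesssim N^3\log N$. This is what the Elekes--Sharir reduction plus polynomial partitioning really produces, now run with the group of rigid motions of the plane replaced by the isometry group $\PSL_2(\R)$ of $\bH^2$. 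Once this bound is in hand, a single Cauchy--Schwarz step yields distinct-distance lower bounds in the exact form I will need: if a bipartite family $E\subseteq A\times B$ with $A,B\subset\bH^2$, $|A|,|B|\le N$, has at least $\delta N^2$ edges realizing only $t$ distinct distances, then $\delta^2 N^4/t\le\sum_v m_v^2\lesssim N^3\log N$, whence $t\gtrsim \delta^2\,N/\log N$.

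Next I would set up the reduction via the geodesic cover. Fix a Dirichlet fundamental domain $F$ for $\Gamma$ centered at $o$, and lift the given points $\bar p_1,\dots,\bar p_N\in Y$ to representatives $p_1,\dots,p_N\in F$. For every pair,
\[
d_Y(\bar p_i,\bar p_j)=\min_{\gamma\in\Gamma} d_{\bH^2}(p_i,\gamma p_j),
\]
and in the co-compact case the minimizing translate $\gamma p_j$ lies within $\diam(Y)$ of $F$, so only the finitely many $\gamma$ whose translates $\gamma F$ meet the $\diam(Y)$-neighborhood of $F$ can attain the minimum. Collecting these into a finite set $\Gamma_0$ of cardinality $M=M(Y)$ (the geodesic-covering number), I assign to each pair a minimizer $\gamma_{ij}\in\Gamma_0$ and apply pigeonhole: some $\gamma^\ast\in\Gamma_0$ minimizes for at least $\binom N2/M$ pairs. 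Writing $q_j=\gamma^\ast p_j$, on this family $E$ of pairs one has $d_Y(\bar p_i,\bar p_j)=d_{\bH^2}(p_i,q_j)$ exactly, so $E$ is a bipartite graph between $\{p_i\}$ and $\{q_j\}$ with $\gtrsim N^2/M$ edges using at most $t$ distinct distances, where $t$ is the number of distinct distances determined in $Y$.

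Feeding $E$ into the bipartite estimate of the first step with $\delta\asymp 1/M$ gives $t\gtrsim N/(M^2\log N)$, which is the desired bound with $c(Y)$ of order $M(Y)^{-2}$. To make $c(Y)$ explicit for the modular surface and for the standard regular surfaces of genus $g\ge2$, I would estimate $M(Y)$ directly as the number of copies of $F$ needed to cover a $\diam(Y)$-neighborhood of $F$, bounding it by a comparison of hyperbolic areas and by sharp forms of the hyperbolic lattice-point (circle) problem, precisely the quantities flagged in the keywords.

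The main obstacle is twofold. First, the hyperbolic equal-distance quadruple bound must be proved on its own: one has to verify that the families of isometries carrying one pair to another form a well-behaved algebraic system in $\PSL_2(\R)$ and that polynomial partitioning and the attendant incidence estimates survive the passage from Euclidean rigid motions to $\PSL_2(\R)$; this is the technical heart. Second, the modular surface is \emph{not} co-compact, so its diameter is infinite, the naive $\Gamma_0$ is no longer finite, and the hyperbolic lattice count within radius $R$ grows like $e^{R}$, obstructing a uniform $M$. I would handle this by decomposing the pairs according to the dyadic scale of $d_Y(\bar p_i,\bar p_j)$: distances in different dyadic bands are automatically distinct, so it suffices either to exhibit many occupied bands or to run the covering-and-pigeonhole argument on a single band carrying a positive proportion of the pairs, on which the covering number is controlled uniformly in $N$; combined with the arithmetic of $\PSL_2(\Z)$ governing those counts, this keeps the final constant dependent only on $Y$. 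I expect this non-cocompact case, rather than the co-compact one, to be where the genuine difficulty lies.
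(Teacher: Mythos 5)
Your co-compact argument is essentially sound and runs parallel to the paper's: the same two inputs appear there as Proposition \ref{prop-finite pieces of domains} (finiteness of the geodesic-covering number via the $\diam(Y)$-neighborhood of $F$) and Lemma \ref{lem-Quadruple} (the quadruple bound $|Q(P)|\lesssim N^3\log N$ in $\bH^2$, which the paper takes from Tao and Rudnev--Selig rather than re-proving, exactly the step you flag as the technical heart). Where you genuinely diverge is the combinatorial step: the paper lifts \emph{all} points to $\tilde P=\cup_{\gamma\in\Gamma_0}\gamma(P)$, a set of $\leq K_\Gamma N$ points, bounds $|Q_Y(P)|\leq|Q(\tilde P)|\lesssim (K_\Gamma N)^3\log(K_\Gamma N)$, and applies Cauchy--Schwarz once, giving $c(Y)\asymp K_\Gamma^{-3}/\log K_\Gamma$; your pigeonhole over the minimizer $\gamma^\ast\in\Gamma_0$ followed by the bipartite Cauchy--Schwarz on a set of $\leq 2N$ points gives $t\gtrsim N/(K_\Gamma^2\log N)$, a slightly better dependence on the covering number (the paper only hints at such savings via its alternative definition \eqref{equation-radical geodesic cover}). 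Your pigeonhole step is correct, since $d(p_i,\gamma p_j)=d(p_j,\gamma^{-1}p_i)$ lets you work with ordered pairs, and all realized distances are nonzero for distinct points.

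The modular surface case, however, contains a genuine gap. You assume that non-compactness forces $\Gamma_0$ to be infinite and try to repair this with a dyadic decomposition, but the dichotomy you propose --- ``many occupied bands, or one band carrying a positive proportion of pairs'' --- is false and, even granting a uniform per-band covering number, quantitatively insufficient: if the $\binom{N}{2}$ pairs spread over $B$ bands with no band dominant, the band branch gives only $t\gtrsim N/(B^2\log N)$ while the band count gives $t\geq B$, and balancing yields roughly $(N/\log N)^{1/3}$, far short of $N/\log N$. (A per-band summation via Cauchy--Schwarz, $t\geq\sum_k n_k^2/Q_k\geq(\sum_k n_k)^2/\sum_k Q_k$, could in principle rescue the exponent, but you would still owe a proof of the uniform covering claim on each band, which you only assert.) The paper's route is different and unconditional: Proposition \ref{prop-distinct distance on modular surface} shows by direct computation that the modular group has \emph{finite} geodesic-covering number, $K_{\PSL_2(\Z)}\leq 10$. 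Writing representatives $z_j=\gamma_j(i)$ in the standard fundamental domain and expanding $\|\gamma_1^{-1}\gamma\gamma_2\|^2$ for $\gamma=\begin{pmatrix}a&b\\c&d\end{pmatrix}$, one checks that any $\gamma$ with $|c|\geq 2$ is beaten by the $c=0$, $|b|\leq 1$ candidates (using $y_1,y_2>\sqrt{3}/2$), and that for $|c|=1$ the minimum occurs with $|a|,|d|\leq 1$; so ten explicit matrices always contain a minimizer, and the cusp poses no obstruction because both representatives lie in the same fundamental domain. Your intuition that the non-cocompact case is where the difficulty lies is right, but the resolution is a finiteness theorem for the cover itself, not a multi-scale decomposition.
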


In order to deal with various hyperbolic surfaces, we propose the concept of \textit{geodesic-covering number} (see Section \ref{concept-geo-cover}) of a hyperbolic surface, which itself can be of independent interest. The finiteness of the geodesic-covering number implies such type of lower bound in the above theorem for distinct distances problem.

In particular for $Y_g$ being standard regular of genus $g\geq 2$, whose fundamental domain in the upper half plane $\bH^2$ can be chosen as a standard regular $4g$-gon, we are able to estimate $c(Y_g)$ explicitly and get the following theorem. 
\begin{thm}\label{thm-genus-g}
For $Y_g$ being standard regular of genus $g\geq 2$, the lower bound of distinct distances among any $N$ points in $Y_g$ is $\geq c\frac{N}{g^{18}(\log N+\log g)}$ for some absolute constant $c>0$.
\end{thm}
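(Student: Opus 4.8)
The plan is to prove Theorem \ref{thm-genus-g} by instantiating the general machinery of Theorem \ref{thm-hyperbolic surfaces} for the specific surface $Y_g$ and then tracking all constants through the argument as explicit functions of $g$. Since $Y_g$ is co-compact (its fundamental domain is a compact regular $4g$-gon), Theorem \ref{thm-hyperbolic surfaces} already guarantees a bound of the shape $\geq c(Y_g)N/\log N$; the entire content of this refined theorem is to show that $c(Y_g)$ can be taken of size $\gtrsim g^{-18}$ and that the logarithmic factor is $\log N + \log g$ rather than something worse in $g$. So the first step is to unwind exactly how $c(Y)$ is produced in the proof of Theorem \ref{thm-hyperbolic surfaces}, isolating the two places where the surface geometry enters: the geodesic-covering number of $Y_g$, and the passage from a distinct-distances bound in $\bH^2$ (via Guth--Katz) to one on the surface.

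The key geometric quantities to estimate explicitly are the side length and inradius/diameter of the standard regular $4g$-gon $P_g$ in $\bH^2$. First I would recall that a regular hyperbolic $4g$-gon with interior angle sum $2\pi$ (the gluing condition for a genus-$g$ surface) has vertices at hyperbolic distance $R_g$ from its center, where $R_g$ grows like $\log g$; concretely one solves the standard trigonometric relation for the regular $4g$-gon and extracts $R_g = \log g + O(1)$ via $\cosh R_g$ being comparable to a linear function of $g$. This is the source of the $\log g$ appearing additively inside the logarithm: a set of $N$ points in $Y_g$ lifts to representatives spread over a region whose relevant diameter scales with both $\log N$ and $R_g \sim \log g$, so the Guth--Katz input is applied at scale $\log(N) + O(\log g)$. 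Second, I would bound the geodesic-covering number of $Y_g$: one needs to count how many translates of $P_g$ under the surface group $\Gamma_g$ are required to capture all pairs of distance-realizing representatives, and this count is controlled by the area growth of hyperbolic balls together with the area $|P_g| = (4g-2)\pi$ of the fundamental domain. The number of translates meeting a ball of radius comparable to the diameter of $P_g$ is where several powers of $g$ accumulate.

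The heart of the bookkeeping is then to assemble these estimates into the final constant. I expect the covering number and the isometric-distortion factors each to contribute polynomially in $g$, and the claim $g^{18}$ is the product of these polynomial contributions; the exponent $18$ should emerge as an explicit sum of the exponents arising from (i) the covering number, (ii) the ratio of the ambient ball size to the fundamental-domain size, and (iii) any squaring introduced by counting ordered pairs of representatives in the incidence/distance argument. I would carry these through by keeping every constant symbolic until the end, then optimizing the radius at which $P_g$-translates are counted so as to minimize the total power of $g$ while retaining the $1/(\log N + \log g)$ shape.

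The main obstacle I anticipate is controlling the geodesic-covering number uniformly in $g$ with the correct polynomial rate, rather than merely proving finiteness as in Theorem \ref{thm-hyperbolic surfaces}. Finiteness only requires co-compactness, but to extract a clean power of $g$ I must quantify precisely how many group translates of the $4g$-gon can simultaneously intersect a ball whose radius is tied to $R_g \sim \log g$, and this requires a hyperbolic-circle/lattice-point count in which the exponential area growth $e^{R_g}$ of balls interacts with the linear-in-$g$ area of $P_g$. Getting the exponent down to $18$ (as opposed to a larger, lossy exponent) will hinge on choosing the covering scale optimally and on being careful that the lift of an $N$-point configuration does not spread over more than $O(\log N + \log g)$ in radius, so that the Guth--Katz distinct-distances estimate in $\bH^2$ can be applied with only the stated logarithmic loss.
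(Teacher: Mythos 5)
Your high-level architecture matches the paper's: bound the geodesic-covering number $K_{Y_g}$ polynomially in $g$, then feed it into the general machinery, and your ball-area versus fundamental-domain-area packing count is essentially the Huber-style argument the paper uses in Proposition \ref{prop-regular-4g-K-Gamma}. But there are two genuine gaps. First, your explanation of the $\log N+\log g$ factor is wrong: you claim the lifted configuration lives in a region of ``relevant diameter'' $\log N+O(\log g)$ and that Guth--Katz is applied at that scale, but $N$ points in $Y_g$ can be placed arbitrarily, so their lifts are not confined to any ball of radius $\sim\log N$, and no diameter control is available or needed. In the paper the factor arises purely combinatorially: the lift has at most $K_{Y_g}N$ points, the hyperbolic distance-quadruple bound $|Q(P)|\lesssim M^3\log M$ (Lemma \ref{lem-Quadruple}, due to Tao/Rudnev--Selig --- not the Euclidean Guth--Katz theorem) is applied with $M=K_{Y_g}N$, and Cauchy--Schwarz gives $|d_Y(P)|\gtrsim N^4/\big((K_{Y_g}N)^3\log(K_{Y_g}N)\big)$. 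This exposes the second, decisive omission: you never identify that $K_{Y_g}$ enters \emph{cubed}, i.e.\ Theorem \ref{thm-geodesic-to-distinct distance} gives $\gtrsim N/(K_\Gamma^3\log(K_\Gamma N))$, so that $18=3\times 6$ with $K_{Y_g}\lesssim g^6$. Your proposed decomposition of the exponent $18$ as a sum of contributions from the covering number, a ball-to-domain ratio, and a ``squaring'' from ordered pairs does not correspond to any step of a workable argument, and without the quadruple/Cauchy--Schwarz skeleton the ``bookkeeping'' you defer cannot produce the stated bound.

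There is also a quantitative error in the trigonometry that would corrupt the exponent if carried through. For the regular $4g$-gon with inner angle $\pi/2g$, setting $\beta=\pi/4g$, the apothem satisfies $\cosh d(O,D)=\cot\beta\asymp g$ while the circumradius satisfies $\cosh d(O,A)=\cot^2\beta\asymp g^2$; your assertion that $\cosh R_g$ is comparable to a linear function of $g$ conflates these two. The paper counts lattice points at radius $2d(O,A)+\diam(Y_g)$, using $\cosh(\diam(Y_g))\leq 2\cot^2\beta-1\lesssim g^2$ and the addition formula to get $\cosh\big(2d(O,A)+\diam(Y_g)\big)\lesssim g^6$, then applies the $g$-uniform lattice count $N_{\Gamma_g}(R)\leq CR^2$ (your packing heuristic, made precise via disjoint disks of radius $|OD|$, works exactly because $\cosh|OD|-1\asymp g$ scales like the domain's area). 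Note also there is no covering scale to ``optimize'': the geodesic cover must include every translate $\gamma(F)$ meeting the $\diam(Y_g)$-neighborhood of $F$, which fixes the counting radius, and the remark following Proposition \ref{prop-regular-4g-K-Gamma} shows $\cosh(\diam(Y_g))\gtrsim g^2$, so the diameter estimate is already tight for this strategy. With your stated value of the circumradius you would either derive an unjustifiably better exponent or be unable to close the count at all.
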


Here the asymptotic goes with respect to both $g$ and $N$, which is not trivial only when $N\gtrsim g^{18}$. One may wonder how many points there can be with pairwise equal distance in a surface of genus $g$. See Section \ref{section-equilateral-dim} for more discussions. 

To evaluate $c(Y_g)$ explicitly for standard regular $Y_g$, we rely on hyperbolic trigonometry and connect it with the hyperbolic circle problem. As a natural analogue of the Gauss circle problem in $\bH^2$, the hyperbolic circle problem asks for the asymptotics of $\#\{\gamma\in\Gamma: d_{\bH^2}(z_0, \gamma\cdot z_0)\leq Q  \}  $ for discrete subgroups $\Gamma\leq\PSL_2(\R)$ and $Q>0$. This problem and related generalizations have been widely studied by various mathematicians including Delsarte \cite{Delsarte}, Huber \cite{Huber, Huber-analytic}, Selberg \cite{Selberg}, Margulis \cite{Margulis}, Patterson \cite{Patterson}, Iwaniec \cite{Iwaniec}, Phillips-Rudnick \cite{Phillips-Rudnick}, Boca-Zaharescu \cite{Boca-Zaharescu}, Kontorovich \cite{Kontorovich} etc. For our applications, we need certain uniformity of lattice counting over surfaces of genus $g$, specifically in the form of \eqref{lattice-count-uniform-g} in proof of Proposition \ref{prop-regular-4g-K-Gamma}.

Here we briefly sketch the strategy of proving Theorem \ref{thm-hyperbolic surfaces}, which is a consequence of Theorem \ref{thm-geodesic-to-distinct distance} with Propositions \ref{prop-finite pieces of domains} and \ref{prop-distinct distance on modular surface}. For any surface $Y$ with universal cover $\bH^2$, its fundamental group is isomorphic to a Fuchsian group $\Gamma_Y\leq\PSL_2(\R)$. Note that $\Gamma_Y$ acts on $\bH^2$ by M\"{o}bius transformation, we have $Y\simeq \Gamma_Y\backslash\bH^2$ endowed the hyperbolic metric from $\bH^2$. For any points $p,q\in Y$, we pick two representatives (still denoted by $p,q$) in a fundamental domain $F$ of $\Gamma_Y$. Then $d_Y(p,q)=\min_{\gamma\in\Gamma_Y}d_{\bH^2}(p,\gamma\cdot q)$. We want to find a  subset $\Gamma_0\subset\Gamma_Y$ such that for any $p,q\in Y$, we have $d_Y(p,q)=d_{\bH^2}(p, \gamma\cdot q)$ for some $\gamma\in\Gamma_0$.
We call the patched region $\cup_{\gamma\in\Gamma_0}\gamma(F)$ a \textit{geodesic cover} of $Y$ and call the smallest $|\Gamma_0|$, denoted by $K_{Y}$ (or $K_{\Gamma_Y}$), the \textit{geodesic-covering number} of $Y$ (or $\Gamma_Y$). 

In Section \ref{section-hyperbolic surfaces} we show that co-compact Fuchsian groups have finite geodesic-covering number. If a Fuchsian group $\Gamma$ is co-compact, its fundamental domain is a closed region without ideal points as vertices. This is equivalent to that $\Gamma\backslash\bH^2$ has finite hyperbolic area and $\Gamma$ contains no parabolic elements, see Corollary 4.2.7 of \cite{Katok}. Especially closed hyperbolic surfaces of genus $g\geq 2$ belong to this case. Moreover, Proposition \ref{prop-regular-4g-K-Gamma} establishes  the estimate $K_{Y_g}\lesssim g^6$ for $Y_g$ being standard regular of genus $g\geq 2$.  
For groups which are not co-compact, we show by explicit analysis that the modular group has finite geodesic-covering number. More specifically, Proposition \ref{prop-distinct distance on modular surface} establishes the estimate $K_{\PSL_2(\mathbb{Z})}\leq 10$. 

Now given any $N$ points $P\subset Y$, if $K_Y<\infty$ we duplicate the points to be $\tilde{P}=\cup_{\gamma\in\Gamma_0}\gamma(P)\subset\bH^2$ on a geodesic cover $\cup_{\gamma\in\Gamma_0}\gamma(F)$ of $Y$ with $|\Gamma_0|=K_Y$. By definition, the distances among points of $P$ in $Y$ all belong to the distances among points of $\tilde{P}$ in $\bH^2$. However, we are not allowed to apply the lower bound for the hyperbolic plane to points of $\tilde{P}$ directly, since we have more number of points now and the inequality actually goes to wrong direction. Instead, we resort to counting of distance quadruples of $\tilde{P}\subset\bH^2$ to establish Theorem \ref{thm-hyperbolic surfaces}. See  Theorem \ref{thm-geodesic-to-distinct distance} for details.

\begin{remark}
For completeness we include the case of flat tori, i.e. $g=1$. We may similarly define $K_\Gamma$ for any discrete subgroup $\Gamma$ of the rigid motion group of $\R^2$. For flat tori which correspond to $\Gamma\simeq\Z^2$, we immediately see that $K_\Gamma<\infty$. Thus by the result of Guth-Katz \cite{Guth-Katz}, the number of distinct distances among $N$ points on any flat torus is $\gtrsim N/\log N$.
\end{remark}
\begin{remark}
There is also an analogue of the unit distance problem in hyperbolic surfaces. Borrowing the arguments from Section 7.6 of \cite{Guth} based on estimates of crossing numbers, one may establish the Spencer-Szemer\'{e}di-Trotter bound to $\bH^2$, i.e. the number of pairs with unit (or equal) distance among any $N$ points in $\bH^2$ is $\lesssim N^{4/3}$. It is also and direct implication of Pach-Sharir theorem \cite{Pach-Sharir} applied to 
hyperbolic circles. For any set of $N$ points in a hyperbolic surface $Y$ with $K_Y$ finite, we lift it to a set of $K_{Y}N$ points on a geodesic cover of $Y$. By Spencer-Szemer\'{e}di-Trotter one may bound the number of unit (or equal) distances among any $N$ points on $Y$ by $\lesssim (K_YN)^{4/3}$. In particular for standard regular surfaces $Y_g$ of genus $g\geq 2$, by Proposition \ref{prop-regular-4g-K-Gamma}, the upper bound becomes $\lesssim g^{8}N^{4/3}$.

\end{remark}

More generally,  we also derive a lower bound for the number of distinct distances between points of any two finite sets $P_1$ and $P_2$ in hyperbolic surfaces with finite geodesic-covering numbers.
\begin{thm}\label{thm-cross distances}
Let $P_1,P_2\subset Y$ be any finite sets in a hyperbolic surface $Y$ with finite geodesic-covering number. Then we have
\[\big|\{d_{Y}(p_1,p_2): p_1\in P_1,p_2\in P_2\}\big|\gtrsim_{Y}\frac{|P_1|^2|P_2|^2}{|P_1\cup P_2|^3\log|P_1\cup P_2|}.\]
\end{thm}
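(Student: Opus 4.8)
The plan is to run a bipartite version of the distance-quadruple counting that underlies Theorem \ref{thm-geodesic-to-distinct distance}, using the geodesic cover to transport the problem into $\bH^2$. Write $D = \big|\{d_Y(p_1,p_2): p_1\in P_1, p_2\in P_2\}\big|$ and list the distinct cross-distances as $\delta_1,\dots,\delta_D$. For each $j$ set $m_j = |\{(p_1,p_2)\in P_1\times P_2 : d_Y(p_1,p_2)=\delta_j\}|$, so that $\sum_j m_j = |P_1||P_2|$. Writing
\[
Q_Y = \big|\{(p_1,p_2,p_1',p_2')\in P_1\times P_2\times P_1\times P_2 : d_Y(p_1,p_2)=d_Y(p_1',p_2')\}\big| = \sum_j m_j^2,
\]
Cauchy--Schwarz gives $|P_1|^2|P_2|^2 = \big(\sum_j m_j\big)^2 \le D\,Q_Y$, hence $D \ge |P_1|^2|P_2|^2 / Q_Y$. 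It therefore suffices to prove the upper bound $Q_Y \lesssim_Y |P_1\cup P_2|^3 \log|P_1\cup P_2|$.

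To bound $Q_Y$ I would lift to a geodesic cover. Fix a fundamental domain $F$ and a minimizing set $\Gamma_0\subset\Gamma_Y$ with $|\Gamma_0|=K_Y$, choose representatives of $P_1,P_2$ in $F$, and put $\tilde P_2 = \bigcup_{\gamma\in\Gamma_0}\gamma(P_2)\subset\bH^2$, so that $|\tilde P_2|\le K_Y|P_2|$. By the defining property of the geodesic cover, to each pair $(p_1,p_2)$ we may assign some $\gamma_{p_1,p_2}\in\Gamma_0$ with $d_Y(p_1,p_2)=d_{\bH^2}(p_1,\gamma_{p_1,p_2}\,p_2)$. The map
\[
(p_1,p_2,p_1',p_2')\longmapsto (p_1,\,\gamma_{p_1,p_2}\,p_2,\,p_1',\,\gamma_{p_1',p_2'}\,p_2')\in P_1\times\tilde P_2\times P_1\times\tilde P_2
\]
sends each equal-distance cross-quadruple in $Y$ to an equal-distance cross-quadruple in $\bH^2$, and it is injective because each second coordinate $\gamma_{p_1,p_2}\,p_2\in\tilde P_2$ determines its unique $F$-representative $p_2$ (boundary identifications being negligible), while $p_1,p_1'$ are read off directly. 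Consequently $Q_Y$ is at most the number of equal-distance quadruples of the single finite set $P_1\cup\tilde P_2\subset\bH^2$.

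At this point I would invoke the hyperbolic distance-quadruple estimate driving Theorem \ref{thm-geodesic-to-distinct distance}: any set of $n$ points in $\bH^2$ determines $\lesssim n^3\log n$ quadruples at equal distance. Applying it with $n = |P_1\cup\tilde P_2| \le |P_1| + K_Y|P_2| \le 2K_Y\,|P_1\cup P_2|$ gives $Q_Y \lesssim K_Y^3\,|P_1\cup P_2|^3\,\log\!\big(2K_Y|P_1\cup P_2|\big) \lesssim_Y |P_1\cup P_2|^3\log|P_1\cup P_2|$, the finite factor $K_Y$ being absorbed into the implied constant. Combining with $D\ge |P_1|^2|P_2|^2/Q_Y$ from the first paragraph yields the claim.

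The main obstacle is the hyperbolic quadruple bound $\lesssim n^3\log n$ in $\bH^2$, namely the Guth--Katz-type polynomial-method input; but this is precisely the estimate already in place for Theorem \ref{thm-geodesic-to-distinct distance}, so the genuinely new work is only the bipartite Cauchy--Schwarz bookkeeping and the check that the lifting map stays injective on cross-quadruples, ensuring that passing to the enlarged set $P_1\cup\tilde P_2$ inflates the count by no more than the constant factor $K_Y$.
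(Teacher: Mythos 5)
Your proposal is correct and follows essentially the same route as the paper: lift to $\bH^2$ via a geodesic cover, bound the bipartite equal-distance quadruples by the quadruple count of a single set of size $\lesssim K_Y|P_1\cup P_2|$ using Lemma \ref{lem-Quadruple}, and finish with Cauchy--Schwarz; the only cosmetic differences are that the paper lifts both $P_1$ and $P_2$ while you lift only $P_2$, and that the paper excludes zero distances from the outset (defining $Q_Y$ with $d\neq 0$) whereas you would absorb the at most $|P_1\cap P_2|^2\leq|P_1\cup P_2|^2$ zero-distance quadruples into the $|P_1\cup P_2|^3\log|P_1\cup P_2|$ bound, which is harmless.
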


\begin{remark}When $P_1$ and $P_2$ are roughly the same size, this lower bound is sharp up to a factor of $\log$. \end{remark}

\subsection{Sharpness of Theorem \ref{thm-genus-g} and conjectures on geodesic-covering number} \label{section-equilateral-dim}
In order to analyze the sharpness of Theorem \ref{thm-genus-g}, we connect it with the equilateral dimension of hyperbolic surfaces. 
The equilateral dimension of a metric space is defined to be the maximal number of points with pairwise equal distance. For the simplest example the equilateral dimension of the Euclidean space $\E^d$ is always $d+1$. The equilateral dimensions of various spaces have been studied by Alon-Milman \cite{Alon-Milman}, Guy \cite{Guy}, Koolen \cite{Koolen} etc. 
We are not aware of any non-trivial bound of equilateral dimension on hyperbolic surfaces in literature. We observe that our results can be applied to the equilateral dimension problem on hyperbolic surfaces. And in converse, the results for equilateral dimensions could also help us to analyze the sharpness of Theorem \ref{thm-genus-g}.

We claim that Theorem \ref{thm-genus-g} implies equilateral dimension of standard regular surfaces $Y_g$ of genus $g$ is $\lesssim g^{18+\epsilon}$. Suppose to the contrary for infinitely many $g$, the surface $Y_g$ has equilateral dimension $\geq C g^{18+\epsilon}$ for some constant $C>0$. Then for each such $g$ there exists a set of $M_g=C g^{18+\epsilon}$ points in $Y_g$ with pairwise equal distance. Hence its number of distinct distances is $1$. On the other hand, by Theorem \ref{thm-genus-g}, the number of distinct distances for any set of $M_g$ points is $\gtrsim \frac{M_g}{g^{18} \log (gM_g)}\gtrsim g^{\epsilon} $ which would approach infinity as $g\rightarrow\infty$. Contradiction.

However, from another approach one may show that the equilateral dimension of $Y_g$ is actually $\lesssim g$. Suppose there are $N_g$ points in $Y_g$ with pairwise equal distance $r>0$. Choosing a fundamental domain $F$ of $Y_g$, we draw a circle of radius $r$ in $\bH^2$ centered at one representative of the $N_g$ points, say $p_0$. By definition, each point has a representative lying on the circle with distance at least $r$ from each other. We order these representatives by $p_i, i=1,\ldots, N_g-1$. For adjacent $p_i,p_j$, let $\alpha_{ij}$ be the smaller positive angle between geodesics connecting $p_0,p_i$ and $p_0,p_j$. By hyperbolic trigonometry, since $d_{\bH^2}(p_i, p_j)\geq r$, 
\[\sin(\alpha_{ij}/2)=\frac{\sinh(d_{\bH^2}(p_i,p_j)/2)}{\sinh(r)}\geq\frac{\sinh(r/2)}{\sinh(r)}=\frac{1}{2\cosh(r/2)}.\]
In the proof of Proposition \ref{prop-regular-4g-K-Gamma}, we get the upper bound $\cosh r\lesssim g^2$, hence $\alpha_{ij}\gtrsim 1/g$. This shows that $N_g\lesssim g$ and hence the equilateral dimension of $Y_g$ is $\lesssim g$. 

By the above analysis on equilateral dimensions, we see that the lower bound for the number of distinct distances among any $N$ points should be better than trivial in the range $g\lesssim N\lesssim g^{18+\epsilon}$. Therefore the factor of $g$ in Theorem \ref{thm-genus-g} is not sharp.

One possible approach to improve Theorem \ref{thm-genus-g} is trying to get a better bound for geodesic-covering number $K_{Y_g}$. One may modify the definition of geodesic cover a little bit, to choose a set $\Gamma_1\subset\Gamma_Y$ for a surface $Y$ such that for any $p,q\in Y$, \begin{equation}\label{equation-radical geodesic cover}d_Y(p,q)=\min_{\gamma_1,\gamma_2\in\Gamma_1}d_{\bH^2}(\gamma_1\cdot p,\gamma_2\cdot q)=\min_{\gamma_{1},\gamma_2\in\Gamma_1}d_{\bH^2}(p,\gamma_1^{-1}\gamma_2\cdot q),\end{equation}
for $p,q$ treated as representatives in some fundamental domain of $\Gamma_Y$. Then $\Gamma_0=\Gamma_1^{-1}\Gamma_1$ is a geodesic cover in the original definition and $|\Gamma_1|$ may be estimated as $\sim |\Gamma_0|^{1/2}$ in many cases. However this definition appears not as convenient for computation. We also observe that for rectangle tori, the four fundamental polygons around a vertex patched together gives a geodesic cover in the sense above. Thus we are tempted to conjecture that the fundamental polygons around one vertex may also work for the hyperbolic case. %Hence we make the following conjecture.
\begin{conjecture}
For standard regular surfaces $Y_g$ of genus $g\geq 2$, the geodesic-covering number $K_{Y_g}$ is $\lesssim g$.
\end{conjecture}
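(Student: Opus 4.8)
The plan is to pin down the \emph{realizing set}
\[
S=\{\gamma\in\Gamma_{Y_g}:\ d_{\bH^2}(p,\gamma\cdot q)=d_{Y_g}(p,q)\ \text{for some }p,q\in F\},
\]
where $F$ is the standard regular $4g$-gon, and to prove $|S|\lesssim g$; since any geodesic cover must contain a realizing element for each pair $(p,q)$ and this minimizer is generically unique, one has $K_{Y_g}=|S|$ up to the lower-dimensional set of pairs where the minimizer is non-unique. The first, soft, observation is that $\gamma\in S$ forces $\gamma F$ to be geometrically close to $F$: taking the competitor $\gamma'=\mathrm{id}$ gives $d_{\bH^2}(p,\gamma\cdot q)\le d_{\bH^2}(p,q)\le\diam F$, so $d_{\bH^2}(F,\gamma F)\le\diam F$. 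This already confines $S$ to a metric ball, but the crude area--packing bound it yields is far from $g$; the whole point is that ``nearest orbit point'' is vastly more restrictive than ``within $\diam F$''.

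The core of the argument should be to show that $S$ consists only of translates reachable from $F$ by a bounded number of edge- or vertex-adjacencies, i.e.\ it lies in $O(1)$ \emph{coronas} of the tiling. I would model the flat case, where for $\Gamma\simeq\Z^2$ the nearest translate of $q\in F$ to $p\in F$ has index vector in $\{-1,0,1\}^2$, precisely the first corona, recovering $K=9$. The hyperbolic analogue rests on two features of the regular $4g$-gon: its $4g$ sides give $4g$ edge-adjacent translates, and its single vertex class—where exactly $4g$ copies of $F$ meet, each contributing interior angle $\pi/(2g)$—gives the ``flower'' of $4g$ copies around that vertex. Both families have size $\Theta(g)$, so it suffices to prove that the nearest-orbit-point map lands in $O(1)$ coronas built from these two families.

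To carry this out I would split into two regimes according to where the optimal representative of $q$ sits relative to $F$. The inradius $r$ and circumradius $R$ of $F$ satisfy $\cosh r=\cot(\pi/4g)\sim g$ and, as extracted in the proof of Proposition~\ref{prop-regular-4g-K-Gamma}, $\cosh R\lesssim g^2$; thus $r\sim\log g$ and $R\sim 2\log g$, and in particular $R\approx 2r$. In the \emph{bulk regime}, where the minimizing geodesic from $p$ to its nearest representative of $q$ stays near the incircle, the required displacement is comparable to the center-to-center distance of edge-adjacent cells, so the realizing translate is edge-adjacent and lies in the first corona. In the \emph{spike regime}, where the optimal lift lies in one of the thin tongues pointing at the common vertex, the local picture is exactly the vertex flower, so the nearest-orbit-point structure is governed by those $4g$ cells and the realizing translate lies in the flower. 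Taking the union of the two families would give $|S|\lesssim g$.

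The main obstacle is the spike regime. Because the interior angle $\pi/(2g)$ tends to $0$, $F$ has $4g$ long thin tongues of length $\sim\log g$ reaching the single vertex, and a minimizing geodesic between a point near one tongue and a point near another can run nearly the full diameter $\sim\log g$ of $F$; it is then not clear that its optimal lift stays within $O(1)$ coronas rather than threading several cells along a tongue. Closing this gap requires a uniform-in-$g$ bound showing that the nearest-orbit-point jump never exceeds a fixed corona depth—presumably through the thinness of geodesic triangles in negative curvature together with the explicit $4g$-gon trigonometry—followed by a careful accounting of the overlap between the edge family and the vertex flower so that the total is $\Theta(g)$ rather than $\Theta(g^2)$. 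This overlap is precisely where the straightforward route through the modified cover $\Gamma_0=\Gamma_1^{-1}\Gamma_1$ with $|\Gamma_1|=4g$ squanders a factor of $g$, which is why the flower construction alone does not settle the conjecture.
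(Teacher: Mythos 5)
You should first be clear about the status of this statement: it is stated in the paper as a \emph{conjecture}, not a theorem. The paper's strongest proven bound for standard regular surfaces is $K_{Y_g}\lesssim g^6$ (Proposition~\ref{prop-regular-4g-K-Gamma}), obtained exactly by the ``crude'' route you dismiss in your first paragraph: confining realizing translates to a metric ball of radius $2d(O,A)+\diam(Y_g)$ and counting lattice points with a uniform-in-$g$ hyperbolic circle estimate. So there is no proof in the paper to compare against, and your proposal does not supply one either: you explicitly leave open the spike regime and the ``careful accounting of the overlap,'' and these are not finishing touches --- they \emph{are} the conjecture. What you have written is a plausible program (and its torus model is indeed the paper's own motivation), but every step that goes beyond the paper's $g^6$ bound is asserted rather than proved.

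Two concrete failures. (i) Your corona count is wrong as stated. In $\bH^2$ the polygon $F$ has $4g$ \emph{distinct} vertices (all identified to a single point of $Y_g$), and each is surrounded by $4g$ cells; hence the first corona --- all cells meeting $\bar{F}$ --- contains about $4g+4g(4g-3)=\Theta(g^2)$ cells, not two families of size $\Theta(g)$. Your phrase ``its single vertex class \ldots gives the flower of $4g$ copies around that vertex'' conflates the quotient vertex with the $4g$ tiling vertices. Restricting to the flower around one chosen vertex is precisely the paper's heuristic behind the modified definition (\ref{equation-radical geodesic cover}); but even granting that a set $\Gamma_1$ with $|\Gamma_1|=4g$ works in that sense, the induced cover $\Gamma_0=\Gamma_1^{-1}\Gamma_1$ has up to $\Theta(g^2)$ elements, since the surface group, unlike $\Z^2$, admits no abelian collapse of the product set --- so this route proves at best $K_{Y_g}\lesssim g^2$, conditionally. (ii) The bulk-regime step ``the required displacement is comparable to the center-to-center distance of edge-adjacent cells, so the realizing translate is edge-adjacent'' is a non sequitur: since $\cosh r=\cot(\pi/4g)$ and $\cosh R=\cot^2(\pi/4g)$ give $R\approx 2r\approx 2\log g$, a number of non-adjacent cells growing with $g$ sit at center-to-center distance comparable to that of edge-adjacent ones, so comparability of distances does not pin down adjacency. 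Until you prove a uniform-in-$g$ corona-depth bound for the nearest-orbit-point map \emph{and} show that the relevant portion of those coronas contains only $O(g)$ cells, you have restated the conjecture, not proved it.
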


In addition, since for any finite index subgroup $\Gamma'$ of a Fuchsian group $\Gamma$, its fundamental domain is the union of finitely many fundamental domains of $\Gamma$. If $K_\Gamma$ is finite, one may expect that $K_{\Gamma'}$ is also finite. We further make the conjecture below. 
\begin{conjecture}\label{conj-arithmetic group}
For any subgroup $\Gamma\leq\PSL_2(\Z)$ of finite index, its geodesic-covering number is finite.
\end{conjecture}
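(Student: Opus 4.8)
The plan is to deduce finiteness of $K_{\Gamma'}$ from the bound $K_{\PSL_2(\Z)}\le 10$ (Proposition \ref{prop-distinct distance on modular surface}) by exploiting the coset decomposition, exactly as the fundamental-domain picture suggests. Write $\Gamma=\PSL_2(\Z)$, $n=[\Gamma:\Gamma']$, and fix representatives with $\Gamma=\bigsqcup_{i=1}^n\Gamma' g_i$, so that $F'=\bigcup_{i=1}^n g_i F$ is a fundamental domain for $\Gamma'$, where $F$ is the standard modular domain. Given $p,q\in F'$, write $p=g_a\bar p$ and $q=g_b\bar q$ with $\bar p,\bar q\in F$. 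Since each $g_i$ acts by isometries,
\[
d_{Y'}(p,q)=\min_{\gamma'\in\Gamma'}d_{\bH^2}(g_a\bar p,\gamma' g_b\bar q)=\min_{\gamma\in S_{a,b}}d_{\bH^2}(\bar p,\gamma\bar q),\qquad S_{a,b}:=g_a^{-1}\Gamma' g_b\subset\Gamma .
\]
Thus it suffices to produce, for each of the finitely many pairs $(a,b)$, a finite set $T_{a,b}\subset S_{a,b}$ realizing this minimum for all $\bar p,\bar q\in F$; then $\Gamma_0':=\{g_a\gamma g_b^{-1}:1\le a,b\le n,\ \gamma\in T_{a,b}\}\subset\Gamma'$ is a geodesic cover, giving $K_{\Gamma'}\le\sum_{a,b}|T_{a,b}|$.

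Note that $S_{a,b}$ is a single right coset of the conjugate lattice $g_a^{-1}\Gamma' g_a$, so each subproblem is a \emph{relative} geodesic-cover statement: minimize $d_{\bH^2}(\bar p,\gamma\bar q)$ as $\gamma$ runs over one coset of a finite-index subgroup, with $\bar p,\bar q$ restricted to the modular domain $F$ (which is only $1/n$ of $F'$, so this is not circular with the original question). I would split $F$ into its compact part $K_0$ of bounded height and the cusp at $\infty$ (height $\ge Y_0$). On $K_0$ the function $(\bar p,\bar q)\mapsto\min_{\gamma\in S_{a,b}}d_{\bH^2}(\bar p,\gamma\bar q)$ is continuous on a compact set, hence bounded by some $R$; the minimizing $\gamma$ then sends $\bar q\in K_0$ into $B(\bar p,R)$, and by proper discontinuity of the $\Gamma$-action only finitely many $\gamma\in S_{a,b}$ can occur. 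So the compact part contributes finitely many translates.

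The crux — and the main obstacle — is the cusp. Here I would reproduce, coset by coset, the explicit analysis behind Proposition \ref{prop-distinct distance on modular surface}. The key structural fact is that the parabolic stabilizer of each cusp of $Y'$ is generated by a single translation, so in the corresponding strip of $F'$ the real parts of all points lie in a fixed interval whose length is the cusp width; consequently only $O(1)$ horizontal (parabolic) translates can ever be distance-minimizing, \emph{independently of the width}. Meanwhile the identity $\Im(\gamma z)=\Im z/|cz+d|^{2}$ shows that any $\gamma$ with $c\neq 0$ drives a point high in the cusp down to bounded height, and hence cannot compete once one of $\bar p,\bar q$ is deep enough. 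Carrying this out uniformly over all pairs $(a,b)$ and over the finitely many cusps of $Y'$ — most delicately when $\bar p$ and $\bar q$ sit over different cusps — is the hard bookkeeping, and it is precisely where non-cocompactness makes the statement subtle, which is why it is posed as a conjecture rather than proved.

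If the cusp estimate goes through uniformly, summing the finite contributions of the compact core and of each cusp over the $n^2$ cosets yields a bound of the shape $K_{\Gamma'}\lesssim n^{2}$, with the implied constant controlled by $K_{\PSL_2(\Z)}$ and the cusp data of $\Gamma'$.
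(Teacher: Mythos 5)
First, a point of calibration: the statement you were asked to prove is Conjecture~\ref{conj-arithmetic group} of the paper. The authors give \emph{no} proof of it --- their only supporting remark is the one-sentence observation that a fundamental domain of a finite-index subgroup $\Gamma'\leq\PSL_2(\Z)$ is a finite union of copies of the modular domain, so that finiteness of $K_{\Gamma'}$ ``may be expected.'' Your coset reduction ($d_{Y'}(p,q)=\min_{\gamma\in S_{a,b}}d_{\bH^2}(\bar p,\gamma\bar q)$ with $S_{a,b}=g_a^{-1}\Gamma' g_b$, followed by reassembly of $\Gamma_0'\subset\Gamma'$) is a correct and worthwhile sharpening of that remark, and your compact-core step is sound: continuity of the minimum on $K_0\times K_0$ plus proper discontinuity does yield finitely many minimizing $\gamma$ there, exactly as in the proof of Proposition~\ref{prop-finite pieces of domains}. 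But you have not proved the statement, and you say so yourself: the entire content of the conjecture is concentrated in the cusp regime, which your write-up handles only conditionally (``if the cusp estimate goes through uniformly'').

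To name the gap concretely: the paper's proof of $K_{\PSL_2(\Z)}\leq 10$ (Proposition~\ref{prop-distinct distance on modular surface}) works because the minimization runs over the full group, which contains the translation $T=\left(\begin{smallmatrix}1&1\\0&1\end{smallmatrix}\right)$ stabilizing the cusp; the explicit matrix computation then eliminates all $|c|\geq 2$ and all but boundedly many $|c|\leq 1$ candidates. In your setting the minimization for a fixed pair $(a,b)$ runs over a single \emph{coset} $S_{a,b}$ of the finite-index subgroup $g_a^{-1}\Gamma' g_a$, and when $a\neq b$ this coset may contain no parabolic fixing the cusp at all. In the worst case --- $\bar p$ and $\bar q$ both deep in the cusp of $F$ but lying over \emph{different} cusps of $Y'$ --- every competitor $\gamma\in S_{a,b}$ has $c\neq 0$, so your identity $\Im(\gamma z)=\Im z/|cz+d|^2$ no longer eliminates anything: it only says all distances are large ($\gtrsim \log\Im\bar p+\log\Im\bar q$), and you must show that the \emph{minimum} among these large values is realized by a fixed finite set of $\gamma$ uniformly as the two heights tend to infinity independently. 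Nothing in your sketch addresses this two-parameter limit, nor the mixed regime (one point deep, one in $K_0$), where the minimizers are governed by maximizing $\Im(\gamma\bar q)$ over the coset and finiteness again needs an argument. Finally, your concluding bound $K_{\Gamma'}\lesssim n^2$ ``with the implied constant controlled by $K_{\PSL_2(\Z)}$'' is unsupported even granting the cusp step: already your compact-core count depends on the radius $R=R(a,b)$ and on the injectivity data of $\Gamma'$, not merely on the index $n$. So the verdict is: a reasonable and honestly flagged strategy outline, consistent with how one would likely attack the conjecture, but with a genuine gap precisely where the paper itself stops --- which is why the statement remains a conjecture rather than a proposition.
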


There are more examples of Fuchsian groups with finite geodesic-covering number. For example, the translation group $$\left\{\begin{pmatrix}1&n\\0&1\end{pmatrix}: n\in\Z\right\}$$ has the strip $\{x+iy: 0<x\leq 1, y>0\}$ as fundamental domain and its geodesic-covering number is $\leq 3$. However it is an infinite index subgroup of $\PSL_2(\Z)$. Other simple examples include finite subgroups of $\PSL_2(\R)$. We further make the following more general conjecture..
\begin{conjecture}
For any discrete subgroup $\Gamma\leq \PSL_2(\R)$ whose fundamental domain has finitely many sides (geometrically finite), its geodesic-covering number is finite. 
\end{conjecture}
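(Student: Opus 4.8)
The plan is to reduce to the two regimes already understood in the paper — the co-compact case (Proposition \ref{prop-finite pieces of domains}) and the explicit cusp analysis behind $K_{\PSL_2(\Z)}\leq 10$ (Proposition \ref{prop-distinct distance on modular surface}) — and then to organize a general geometrically finite $\Gamma$ so that every distance-realizing element is forced into one finite list. First I would fix a fundamental domain $F$ with finitely many sides, which exists precisely under geometric finiteness (e.g. a Dirichlet domain). Its ends are of two types: cusps, stabilized by a cyclic parabolic group, and funnels, stabilized by a cyclic hyperbolic group. Using a thick--thin decomposition I would write $F=F_0\cup E_1\cup\cdots\cup E_k$, where $F_0$ is compact and each $E_j$ is a standard end region carrying an explicit cyclic stabilizer $\langle\sigma_j\rangle$ together with a fundamental sub-region for $\langle\sigma_j\rangle$. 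The goal is to produce a finite $\Gamma_0\subset\Gamma$ with $\min_{\gamma\in\Gamma}d_{\bH^2}(p,\gamma\cdot q_0)=\min_{\gamma\in\Gamma_0}d_{\bH^2}(p,\gamma\cdot q_0)$ for all representatives $p,q_0\in F$.

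Next I would dispose of the bounded interactions. Since $\id$ always competes, the realized distance never exceeds $d_{\bH^2}(p,q_0)$; whenever this is at most a fixed constant $D_0$ (in particular whenever both $p,q_0$ lie within bounded distance of $F_0$), any minimizing $\gamma$ satisfies $d_{\bH^2}(\gamma F,F)\leq D_0$, and proper discontinuity leaves only finitely many such $\gamma$. I would place all of these into $\Gamma_0$. This step is exactly the co-compact argument applied to the compact core $F_0$, and it already settles the case of no ends.

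The \emph{crux} is the deep-in-end interaction. Consider $q_0$ lying far out in an end $E_j$. In the cusp model (the cusp at $\infty$ with $\sigma_j\colon z\mapsto z+1$ and strip $0\leq\Re z<1$) the competitors are the points $q_0+n$, and since $\Re p,\Re q_0\in[0,1)$ the minimizer over $n$ forces $|n|\leq 1$; thus only $\sigma_j^{0},\sigma_j^{\pm1}$, postcomposed with the finite core set from the previous step, can be active for pairs entering this cusp. The funnel model is parallel: with $\sigma_j\colon z\mapsto\lambda z$ along the imaginary ray and fundamental annulus $1\leq|z|<\lambda$, the equidistant coordinate of both endpoints lies in one fundamental interval, again bounding the admissible power of $\sigma_j$. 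The guiding principle that makes this uniform is that a translate $\gamma\cdot q_0$ of a deep-end point is either itself deep in the same direction, hence a bounded power of $\sigma_j$ controlled by the fundamental sub-region, or else is pushed toward the limit set and so lies far from both $F_0$ and $p$, whence it never wins the minimum.

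Finally I would set $\Gamma_0$ to be the finite core set together with its products with the finitely many admissible end elements $\sigma_j^{0},\sigma_j^{\pm1}$ over the $k$ ends, which is finite. I expect the main obstacle to be the cross case, where $p$ is deep in one end and $q_0$ deep in another (or in the same end with the minimizing geodesic wrapping): one must show that the minimizing geodesic leaves both deep regions and passes through a bounded collar near $F_0$, so that the active $\gamma$ decomposes as a bounded core element sandwiched between two individually bounded end-contributions. Making this decomposition genuinely uniform as the points recede to infinity in the ends — that is, proving a single finite $\Gamma_0$ works for all pairs simultaneously — is the delicate point, and it is where the separation of horoballs (respectively equidistant half-collars) from the compact core must be quantified, generalizing the explicit estimates carried out for $\PSL_2(\Z)$.
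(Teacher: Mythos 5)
The statement you were asked to prove is Conjecture 3 of the paper: the authors explicitly leave it open, proving finiteness of the geodesic-covering number only for co-compact groups (Proposition \ref{prop-finite pieces of domains}) and for $\PSL_2(\Z)$ by a brute-force matrix computation (Proposition \ref{prop-distinct distance on modular surface}). So there is no proof in the paper to compare against, and your text must stand on its own as a proof of the general geometrically finite case. It does not: it is a reduction outline whose central step is asserted rather than proved, as you yourself concede.

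Two concrete gaps. First, the core step misapplies proper discontinuity: for a non-co-compact $\Gamma$ the fundamental domain $F$ is unbounded, and the set $\{\gamma\in\Gamma: d_{\bH^2}(\gamma F,F)\leq D_0\}$ is in general infinite --- for $\PSL_2(\Z)$, every parabolic power $T^n$ satisfies $d_{\bH^2}(T^nF,F)=0$, since the translates all share the cusp at $\infty$ and the strips approach each other at height $y\to\infty$. Proper discontinuity gives finiteness only for compact sets, so this step legitimately covers only pairs within bounded distance of the compact core $F_0$. Second, and more seriously, for $p,q$ deep in ends the benchmark distance $d_{\bH^2}(p,q)$ itself grows with the depth, so a minimizing $\gamma$ need not move $q$ within any fixed distance of $p$, and the finite core list is irrelevant to such pairs. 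Your pure-power computation ($|n|\leq 1$ for $\sigma_j^n$ when both points lie in one fundamental strip or annulus) controls only $\gamma$ that are powers of a single end stabilizer; the actual candidates have the form $\sigma_j^{a}g\,\sigma_l^{b}$, and bounding $a,b$ uniformly over $g$ and over all depths --- your ``sandwich decomposition'' --- is precisely the content of the conjecture. The dichotomy you invoke (a translate of a deep-end point is either deep in the same direction with bounded power, or pushed toward the limit set and hence never minimal) is plausible, and could perhaps be extracted from precise invariance of cusp horoballs and convexity of funnel collars, but nothing in your sketch quantifies it; note that even for the single cusp of the modular surface the paper needed the explicit estimates on $\|\gamma_1^{-1}\gamma\gamma_2\|^2$ to rule out all but $|c|\leq 1$, $|a|,|d|\leq 1$, and you have supplied no analogue. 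As written, the proposal is a reasonable plan of attack on an open problem, not a proof.
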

There are more questions that could be asked. For instances, if a Fuchsian group has finite geodesic-covering number, does any of its finite indexed subgroup also have finite geodesic-covering number? If true, then by Poincar\'{e}'s theorem, for Conjecture 3 we only need to focus on groups without elliptic elements. In general, how does geodesic-covering number relate to the signature of a Fuchsian group? Even more general, how does it relate to Fenchel-Nielsen coordinates in the Teichm\"{u}ller space?

\textbf{Notation.} Throughout this paper we use the notation $f\gtrsim g$ to mean that there is an absolute constant $C>0$ such that $f\geq Cg$, and we use $f'\lesssim g'$ to mean that $|f'|\leq C' g'$ for some absolute constant $C'>0$. We use $f\asymp g$ to mean that $f\lesssim g$ and also $f\gtrsim g$. 
\bigskip

{\bf Acknowledgements.} We would like to thank Harald Helfgott for helpful discussions.  We  thank Misha Rudnev for his  comments and Adam Sheffer for pointing out the improved lower bounds in higher dimensional Euclidean spaces, and thank the comments by Amitay Kamber. Both authors are partially supported by the Humboldt Professorship of Professor Helfgott.

\section{Geodesic-covering number and distinct distances}\label{section-hyperbolic surfaces}

We propose the concept of \textit{geodesic-covering number} for discrete subgroups of $\PSL_2(\R)$ then use its estimates to deal with the distinct distances problem in closed hyperbolic surfaces and the modular surface.  

\subsection{Geodesic-covering number}\label{concept-geo-cover}
In general, let $\Gamma\leq G=\PSL_2(\R)$ be a Fuchsian group, which acts on $\bH^2$ discontinuously. The discrete subgroup $\Gamma$ is of \textit{first kind} if it has finite co-volume, i.e. a fundamental domain of $\Gamma\backslash\bH^2$ has finite hyperbolic volume. In particular, surface groups and the modular group $\PSL_2(\Z)$ are all Fuchsian groups of first kind. 

Generally for any discrete subgroup $\Gamma\subset\PSL_2(\R)$, let $Y$ be the hyperbolic surface associated with $\Gamma$ and $F$ be the fundamental domain of $Y$, we propose the question of finding a subset $\Gamma_0\subset\Gamma$ such that
\begin{equation}\label{equation-geodesic cover}d_{Y}(p,q)=\min_{\gamma\in\Gamma_0}d_{\bH^2}(p,\gamma(q)),\ \forall p,q\in Y.\end{equation}
We call the patched region of fundamental domains $U=\cup_{\gamma\in\Gamma_0}\gamma(F)$ a \textit{geodesic cover} of $Y$. We say $U$ is \textit{minimal} if the cardinality of $\Gamma_0$ attains the minimal and $U\supset F$ (roughly $1\in\Gamma_0$) and we denote by $K_{\Gamma}$ the smallest $|\Gamma_0|$. We call it the \textit{geodesic-covering number} of $\Gamma$.

We expect the geodesic-covering number is finite for many discrete subgroups of $\PSL_2(\R)$.
\begin{prop}\label{prop-finite pieces of domains}
For any co-compact discrete subgroup $\Gamma\subset\PSL_2(\R)$, its geodesic-covering number $K_{\Gamma}$ is finite. 
\end{prop}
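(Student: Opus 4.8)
The plan is to exploit the compactness of the fundamental domain to obtain a uniform upper bound on surface distances, and then to show that only finitely many translates of $F$ can realize such distances. First I would recall that, choosing representatives $p,q$ in $F$, one has $d_Y(p,q)=\min_{\gamma\in\Gamma}d_{\bH^2}(p,\gamma(q))$. Since $\Gamma$ is co-compact, the closure $\overline F$ is compact, so $D:=\diam(\overline F)<\infty$. Taking $\gamma=\id$ gives $d_Y(p,q)\le d_{\bH^2}(p,q)\le D$ for all $p,q\in F$. Consequently any $\gamma$ attaining the minimum must satisfy $d_{\bH^2}(p,\gamma(q))\le D$.

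Next I would package the candidate elements geometrically. Let $N_D(\overline F)=\{x\in\bH^2:\ d_{\bH^2}(x,\overline F)\le D\}$ denote the closed $D$-neighborhood, and set
$$\Gamma_0=\{\gamma\in\Gamma:\ \gamma(\overline F)\cap N_D(\overline F)\neq\emptyset\}.$$
If $\gamma$ realizes $d_Y(p,q)$, then $p\in\overline F$ and $\gamma(q)\in\gamma(\overline F)$ lie within distance $D$, so $\gamma(\overline F)$ meets $N_D(\overline F)$ and hence $\gamma\in\Gamma_0$. Thus $d_Y(p,q)=\min_{\gamma\in\Gamma_0}d_{\bH^2}(p,\gamma(q))$ for all $p,q$, which is exactly \eqref{equation-geodesic cover}; moreover $\id\in\Gamma_0$, so $\cup_{\gamma\in\Gamma_0}\gamma(F)\supset F$ and $\Gamma_0$ is a genuine geodesic cover.

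It remains to check that $\Gamma_0$ is finite, which is where the real work sits. Because $\bH^2$ is a proper metric space, the closed and bounded set $N_D(\overline F)$ is compact. The tessellation $\{\gamma(\overline F)\}_{\gamma\in\Gamma}$ is locally finite, a standard consequence of the proper discontinuity of the $\Gamma$-action (and of the fact that $F$ may be taken to be a Dirichlet domain). Hence only finitely many translates $\gamma(\overline F)$ can intersect the compact set $N_D(\overline F)$, giving $|\Gamma_0|<\infty$ and therefore $K_{\Gamma}\le|\Gamma_0|<\infty$.

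The crux of the argument is the compactness of $N_D(\overline F)$ together with the local finiteness of the tiling; both rely essentially on co-compactness, i.e. on the compactness of $\overline F$ that makes $D$ finite. This is precisely the feature that fails for groups with cusps such as $\PSL_2(\Z)$, where $\diam(\overline F)=\infty$ and the uniform distance bound collapses, which is why the modular surface must be treated separately.
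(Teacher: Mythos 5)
Your proof is correct and takes essentially the same approach as the paper: bound any distance-realizing isometry $\gamma$ by a finite diameter, collect all translates of $F$ meeting a corresponding neighborhood of $F$, and conclude $K_\Gamma<\infty$ from compactness together with local finiteness of the tessellation. The only cosmetic differences are that you use $D=\diam(\overline{F})$ where the paper uses the (smaller) surface diameter $\diam(Y)=\sup_{x,y\in F}\min_{\gamma\in\Gamma}d_{\bH^2}(x,\gamma(y))$, and that you spell out the finiteness step (properness of $\bH^2$ plus local finiteness of the tiling) which the paper leaves implicit.
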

\begin{proof}
If $\Gamma$ is co-compact, we may choose a closed fundamental domain $F\subset\bH^2$ without ideal points as vertices. Denote $Y:=\Gamma\backslash\bH^2$. Then its diameter
\[\diam(Y):=\sup_{x,y\in F}\min_{\gamma\in\Gamma}d_{\mathbb{H}^2}(x, \gamma(y))\] 
is finite.
Let $U\subset\bH^2$ be
\[U:=\{z\in\bH^2\mid d_{\bH^2}(z,F)\leq \diam(Y)\}\]
and $\tilde{U}\supset U$ be 
\[\tilde{U}:=\cup\{\gamma(F)\mid \gamma\in\Gamma, \gamma(F)\cap U\neq\emptyset\}.\]
For any $p,q\in Y$, choose two representatives $x,y\in F$. We claim that the shortest geodesic connecting $x$ and $y$ in $\bH^2$ lies in $\tilde{U}$. In other words, there is some representative $y'\in\tilde{U}$ of $q$ such that $d_{\bH^2}(x,y')=d_Y(p,q)$. Otherwise, $d_{Y}(p,q)=d_{\bH^2}(x,\gamma(y))$ for some $\gamma\in\Gamma$ with $\gamma(F)\cap U=\emptyset$, so that $d_{\bH^2}(x,\gamma(y))> \diam(Y)\geq d_{Y}(p,q)$, a contradiction.

Now for each fundamental domain $F'\subset\tilde{U}$, we choose a $\gamma'\in\Gamma$ such that $F'=\gamma'(F)$. The set $\Gamma_0$ consisting of these isometries satisfies (\ref{equation-geodesic cover}). The number of fundamental domains $F'\subset\tilde{U}$ is finite and so $K_\Gamma\leq |\Gamma_0|<\infty$.
\end{proof}

Let $\bH^2$ be the hyperbolic plane and $G=\PSL_2(\R)$ be its isometry group which acts on $\bH^2$ by M\"{o}bius transformation: 
\[z\mapsto \gamma\cdot z=\dfrac{az+b}{cz+d}, \text{ for }\gamma=\begin{pmatrix}a&b\\c&d\end{pmatrix}\in\PSL_2(\R), z\in\bH^2.\] 
Let $P\subset\bH^2$ be a set of $N$ points and define the set of \textit{distance quadruples}
\begin{equation}\label{defn-Q(P)}
    Q(P):=\{(p_1,p_2; p_3,p_4)\in P^4: d(p_1,p_2)=d(p_3,p_4)\neq 0\},
\end{equation}
where $d(\cdot,\cdot)$ denotes the hyperbolic metric. 
Denote the distance set by
\[d(P):=\{d(p_1,p_2): p_1,p_2\in P\}.\]
Then we have a close relation between $d(P)$ and $Q(P)$ as follows. Suppose $d(P)=\{d_i: 1\leq i\leq m\}$ and $n_i$ is the number of pairs of points in $P$ with distance $d_i$. So $|Q(P)|=\sum_{i=1}^mn_i^2$. Since $\sum_{i=1}^mn_i=2\binom{N}{2}=N^2-N$, by Cauchy-Schwarz inequality we get
\[(N^2-N)^2=\left(\sum_{i=1}^mn_i\right)^2\leq\left(\sum_{i=1}^mn_i^2\right)m=|Q(P)||d(P)|.\]
Rearranging the inequality gives
\begin{equation}\label{equation-Cauchy-Schwarz}
|d(P)|\geq\dfrac{N^4-2N^3}{|Q(P)|}.
\end{equation}

Tao  gave an argument in his blog \cite{Tao} then later fulfilled by \cite{Rudnev-Selig} with further details using Klein quadric to derive the following result. 
\begin{lemma}\label{lem-Quadruple}
\begin{equation}
    |Q(P)|\lesssim N^3\log N.
\end{equation}
\end{lemma}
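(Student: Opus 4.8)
The plan is to pass from distance quadruples to isometries, and then from isometries to a line-incidence problem in $\R^3$. Since $\bH^2$ is two-point homogeneous, for any two pairs $(p_1,p_2)$ and $(p_3,p_4)$ with $d(p_1,p_2)=d(p_3,p_4)\neq 0$ there is a \emph{unique} orientation-preserving isometry $g\in G=\PSL_2(\R)$ with $g(p_1)=p_3$ and $g(p_2)=p_4$ (an element of $\PSL_2(\R)$ has three real parameters, and prescribing the two images uses all of them once the distance is fixed). Hence, writing $I(g):=\#\{p\in P: g(p)\in P\}$, every quadruple in $Q(P)$ is recorded exactly once by the isometry it induces, and the fiber over a given $g$ consists precisely of the ordered pairs of distinct points $(p_1,p_2)$ with $g(p_1),g(p_2)\in P$. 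Therefore
\[|Q(P)|=\sum_{g\in G}I(g)(I(g)-1)\leq\sum_{g:\,I(g)\geq 2}I(g)^2,\]
and it remains to control how many isometries send many points of $P$ into $P$.

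The core step is to set up the parametrization underlying the Klein-quadric argument of Tao \cite{Tao} and Rudnev-Selig \cite{Rudnev-Selig}. Realizing $G$ as a three-dimensional variety, for fixed $p,p'\in P$ the condition $g(p)=p'$ cuts out a line (in suitable coordinates on the Klein quadric) in $\R^3$. Letting $(p,p')$ range over $P\times P$ produces a family $\mathcal L$ of at most $N^2$ lines, and an isometry $g$ with $I(g)=t$ corresponds to a point incident to exactly $t$ lines of $\mathcal L$. The essential geometric input is a non-degeneracy estimate: because two points determine an isometry, no plane and no regulus can contain more than $\asymp N$ of the lines of $\mathcal L$. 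This is exactly the hypothesis needed to apply the Guth-Katz incidence theorem for lines in $\R^3$, which then yields
\[\#\{g\in G: I(g)\geq t\}\lesssim\frac{N^3}{t^2},\qquad 2\leq t\leq N.\]

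Finally I would decompose dyadically over the $O(\log N)$ scales $t\asymp 2^j$ with $2\leq 2^j\leq N$, using $I(g)\leq N$ throughout, to obtain
\[|Q(P)|\lesssim\sum_{j}2^{2j}\,\#\{g: I(g)\asymp 2^j\}\lesssim\sum_{j}2^{2j}\cdot\frac{N^3}{2^{2j}}\lesssim N^3\log N,\]
which is the claimed bound; the logarithm is the price of summing over the dyadic scales. The main obstacle is not the bookkeeping above but the step of setting up the algebraic parametrization correctly, namely realizing the conditions ``$g(p)=p'$'' as honest lines through the Klein quadric and verifying the non-degeneracy hypothesis on planes and reguli, since it is precisely this that allows the Euclidean Guth-Katz machinery to be transported to $\bH^2$.
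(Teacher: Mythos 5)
Your proposal is correct and is essentially the paper's own route: the paper does not reprove Lemma \ref{lem-Quadruple} but cites exactly this argument --- Tao's reduction of distance quadruples to isometries of $\bH^2$ via the energy identity $|Q(P)|=\sum_g I(g)(I(g)-1)$, completed by Rudnev--Selig \cite{Rudnev-Selig}, who realize the conditions $g(p)=p'$ as lines through the Klein quadric and verify the plane/regulus non-degeneracy needed to apply the Guth--Katz incidence theorem in $\R^3$, followed by the standard dyadic summation. You correctly identify the Klein-quadric parametrization and the non-degeneracy check as the technical core, which is precisely the content supplied by \cite{Rudnev-Selig}.
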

We are also able to prove this result by working explicitly with isometries of $\bH^2$.  Combining this Lemma with \eqref{equation-Cauchy-Schwarz},  one derives a lower bound for distinct distances in the hyperbolic plane. 

Now we  connect the  geodesic-covering number with  distinct distances problem on any hyperbolic surface $Y$ with corresponding fundamental group $\Gamma\subset G$. 
\begin{thm}\label{thm-geodesic-to-distinct distance}
Assume $Y$ is a hyperbolic surface with fundamental group $\Gamma$ and $K_{\Gamma}$ is finite. Then a set of $N$ points on $Y$ determines 
$$ \gtrsim \frac{N}{K^3_{\Gamma}\log(K_{\Gamma}N) }$$
distinct distances. 
\end{thm}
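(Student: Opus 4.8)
The plan is to avoid applying the hyperbolic-plane distinct-distance bound directly to the lifted point set (which has more points, so the inequality would run the wrong way), and instead to transport the \emph{counting of distance quadruples} from $Y$ up to $\bH^2$. First I would fix a fundamental domain $F$ chosen as a set of unique $\Gamma$-representatives and, using $K_\Gamma<\infty$, a geodesic cover $\Gamma_0\subset\Gamma$ realizing \eqref{equation-geodesic cover} with $|\Gamma_0|=K_\Gamma$ and $1\in\Gamma_0$. Picking one representative in $F$ for each of the $N$ points of $P$, set $\tilde P=\cup_{\gamma\in\Gamma_0}\gamma(P)\subset\bH^2$, so that $P\subset\tilde P$ and $|\tilde P|\le K_\Gamma N$.

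Next I would introduce the surface analogue of the quadruple set,
$$Q_Y(P):=\{(p_1,p_2;p_3,p_4)\in P^4: d_Y(p_1,p_2)=d_Y(p_3,p_4)\neq 0\},$$
and rerun the Cauchy--Schwarz computation that produced \eqref{equation-Cauchy-Schwarz}. That argument is purely combinatorial in the distance function and so applies verbatim to $d_Y$: writing $d_Y(P)=\{d_i\}_{i=1}^m$ with $n_i$ the number of ordered pairs at distance $d_i$, one has $(N^2-N)^2\le |d_Y(P)|\,|Q_Y(P)|$, hence $|d_Y(P)|\ge (N^4-2N^3)/|Q_Y(P)|$. It therefore suffices to bound $|Q_Y(P)|$ from above.

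The crux is an injection $\phi\colon Q_Y(P)\hookrightarrow Q(\tilde P)$. For each ordered pair $(p,q)$ I fix, via \eqref{equation-geodesic cover}, an element $\gamma_{pq}\in\Gamma_0$ with $d_Y(p,q)=d_{\bH^2}(p,\gamma_{pq}(q))$, and set $\phi(p_1,p_2;p_3,p_4)=(p_1,\gamma_{p_1p_2}(p_2);p_3,\gamma_{p_3p_4}(p_4))$. Because $d_{\bH^2}(p_1,\gamma_{p_1p_2}(p_2))=d_Y(p_1,p_2)=d_Y(p_3,p_4)=d_{\bH^2}(p_3,\gamma_{p_3p_4}(p_4))\neq 0$ and all four points lie in $\tilde P$, the image lies in $Q(\tilde P)$. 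Injectivity uses that $F$ carries unique $\Gamma$-representatives: the first and third coordinates return $p_1,p_3$ directly, while $p_2$ (resp.\ $p_4$) is recovered as the unique point of $F$ in the $\Gamma$-orbit of the second (resp.\ fourth) coordinate, independently of which $\gamma_{p_ip_j}$ was chosen. Thus $|Q_Y(P)|\le |Q(\tilde P)|$.

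Finally I would apply Lemma~\ref{lem-Quadruple} to the planar set $\tilde P$, giving $|Q(\tilde P)|\lesssim |\tilde P|^3\log|\tilde P|\le (K_\Gamma N)^3\log(K_\Gamma N)$, and combine:
$$|d_Y(P)|\ge \frac{N^4-2N^3}{|Q_Y(P)|}\ge \frac{N^4-2N^3}{(K_\Gamma N)^3\log(K_\Gamma N)}\gtrsim \frac{N}{K_\Gamma^3\log(K_\Gamma N)},$$
the last step holding once $N$ exceeds a small absolute constant. I expect the main obstacle to be the injectivity of $\phi$, specifically the boundary-identification issues underlying the ``unique representative'' claim; this is precisely what upgrades the trivial (wrong-direction) inclusion $d_Y(P)\subset d(\tilde P)$ into genuine control of the surface quadruple count by the planar one.
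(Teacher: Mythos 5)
Your proposal is correct and follows essentially the same route as the paper: lift $P$ to $\tilde P=\cup_{\gamma\in\Gamma_0}\gamma(P)$ with $|\tilde P|\le K_\Gamma N$, dominate the surface quadruple count by the planar one, apply Lemma~\ref{lem-Quadruple}, and finish with the Cauchy--Schwarz inequality as in \eqref{equation-Cauchy-Schwarz}. The only difference is that where the paper simply asserts the inclusion $Q_Y(P)\subset Q(\tilde P)$, you spell out the explicit injection $\phi$ (replacing $p_2$ by $\gamma_{p_1p_2}(p_2)$, etc., with injectivity via unique representatives in $F$), which is a more careful rendering of the same step rather than a different argument.
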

\begin{proof}

For any set $P$ of $N$ points on $Y$, we choose a minimal geodesic cover $\Gamma_0\subset\Gamma$ with $|\Gamma_0|=K_\Gamma$ such that \[d_{Y}(P):=\{d_{Y}(p,q):  p,q\in P\}\subset d_{\bH^2}(\cup_{\gamma\in\Gamma_0}\gamma(P)).\]
Then
\begin{align*}
    Q_{Y}(P):=&\{ (p_1, p_2; p_3, p_4)\in P^4: d_ Y(p_1, p_2)=d_Y(p_3, p_4)\neq 0\} \nonumber \\
\subset& Q(\cup_{\gamma\in\Gamma_0}\gamma(P)),
\end{align*} 
where $Q(P)$ is defined in \eqref{defn-Q(P)}. Since $|\cup_{\gamma\in\Gamma_0}\gamma(P))|\leq K_\Gamma|P|=K_\Gamma N$, by Lemma \ref{lem-Quadruple} we get  
\begin{equation}
   |Q_Y(P)|\leq |Q(\cup_{\gamma\in\Gamma_0}\gamma(P))| \lesssim (K_{\Gamma}N)^3 \log (K_{\Gamma}N). 
\end{equation} 
Similar to \eqref{equation-Cauchy-Schwarz}, by the Cauchy-Schwarz inequality, we have
$$|d_Y(P)|\geq \frac{N^4-2N^3}{|Q_Y(P)|} \gtrsim \frac{N}{K^3_{\Gamma}\log(K_{\Gamma}N)  } .$$
We get the desired lower bound.
\end{proof}

\section{Distinct distances in hyperbolic surfaces}
In this section,  we  give precise estimates for  geodesic-covering numbers of closed hyperbolic surfaces and the modular surface.
\subsection{Closed hyperbolic surfaces of genus \texorpdfstring{$g\geq 2$}{}}

In this subsection we deal with surface groups. Here a surface group $\Gamma_g$ is the fundamental group of a closed hyperbolic surface $Y_g$ of genus $g \ge 2$, with the following presentation
\[\Gamma_g:=\langle a_i,b_i: 1\leq i\leq g\rangle,\]
in which $a_i,b_i\in G$ satisfy $[a_1,b_1]\cdots[a_g,b_g]=1$. Here $[a_i,b_i]=a_ib_ia_i^{-1}b_i^{-1}$ is the commutator. Topologically, the generators $a_i,b_i$ represent the homotopy classes of closed geodesics on $Y_g$ and the unique relator is derived from the condition of gluing sides of a $4g$-gon in $\bH^2$ as a fundamental domain of $Y_g$. Note that for non-isometric closed surfaces of fixed genus $g$, the subgroups $\Gamma_g$ could be different. The moduli space of isometry classes of surfaces of genus $g$ is characterized by the Teichm\"{u}ller space $T(Y_g)\simeq\R^{6g-6}$.

Now for the standard regular surfaces, we estimate their geodesic-covering numbers concretely as follows.
\begin{prop}\label{prop-regular-4g-K-Gamma}
For the surface of genus $g$ with standard regular fundamental $4g$-gon of inner angle $\frac{\pi}{2g}$, we have $K_{Y_g}\lesssim g^6$. 
\end{prop}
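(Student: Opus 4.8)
The plan is to bound the geodesic-covering number $K_{Y_g}$ by controlling the diameter of the standard regular genus-$g$ surface and then counting how many copies of the fundamental $4g$-gon can meet a hyperbolic disk of that diameter. By the proof of Proposition \ref{prop-finite pieces of domains}, it suffices to estimate $|\Gamma_0|$ where $\Gamma_0$ consists of those $\gamma\in\Gamma_g$ for which $\gamma(F)$ meets the enlarged region $\tilde U$, and $\tilde U$ is contained in a disk of radius $O(\diam(Y_g)+\diam(F))$ centered at a point of $F$. So the first task is to estimate two geometric quantities for the standard regular $4g$-gon $F$ with inner angle $\frac{\pi}{2g}$: its inradius/circumradius (hence its diameter and hyperbolic area) and the diameter of the quotient surface $Y_g$.

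First I would use hyperbolic trigonometry on the regular $4g$-gon. Subdividing $F$ into $4g$ isosceles triangles from the center, each central triangle has apex angle $\frac{2\pi}{4g}=\frac{\pi}{2g}$ and base angles $\frac{\pi}{4g}$ (half the inner angle). The hyperbolic law of cosines for angles then pins down the circumradius $R$ via a relation of the form $\cosh R = \cos(\frac{\pi}{4g})/\sin(\frac{\pi}{4g})\cdot(\dots)$, from which one extracts $\cosh R\lesssim g^2$ — this is exactly the bound $\cosh r\lesssim g^2$ referenced in Section \ref{section-equilateral-dim}. By Gauss–Bonnet, $\mathrm{area}(F)=(4g-2)\pi-4g\cdot\frac{\pi}{2g}=(4g-4)\pi\asymp g$. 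Since $\diam(Y_g)\le\diam(F)\le 2R$, everything is governed by $R$, and $\cosh R\lesssim g^2$ gives $\diam(Y_g)\lesssim\log g$.

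Next I would count the number of translates $\gamma(F)$ meeting $\tilde U$. Each such translate has the same area $\asymp g$ and lies inside a disk $D$ of radius $\rho\lesssim\diam(Y_g)+R$; since the translates tile with disjoint interiors, their number is at most $\mathrm{area}(D)/\mathrm{area}(F)$. The hyperbolic area of a disk of radius $\rho$ is $2\pi(\cosh\rho-1)\asymp e^{\rho}$, and with $\cosh R\lesssim g^2$ one gets $e^{\rho}\lesssim (\cosh R)^{O(1)}\lesssim g^{O(1)}$. Balancing the exponent carefully — this is where the uniform lattice-counting estimate \eqref{lattice-count-uniform-g} enters, to ensure the implied constants do not secretly depend on $g$ — yields $\mathrm{area}(D)\lesssim g^{c}$ for an explicit $c$, and dividing by $\mathrm{area}(F)\asymp g$ produces $K_{Y_g}\lesssim g^{c-1}=g^6$.

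The main obstacle I expect is twofold: getting the trigonometric estimate $\cosh R\lesssim g^2$ with the correct power of $g$ rather than a cruder bound, and making the disk-covering count \emph{uniform} in $g$. A naive volume-over-volume count could lose a factor, so the delicate point is controlling the ratio $\mathrm{area}(D)/\mathrm{area}(F)$ sharply; one must track that $\rho$ enters through $\cosh\rho\asymp\cosh(\diam(Y_g))\cdot\cosh R$ and that the area of the disk grows like $(\cosh R)^{3}$ up to the contribution from the surface diameter, giving the power $6$ after dividing out the linear-in-$g$ area of $F$. Ensuring the lattice-point count of Proposition \ref{prop-regular-4g-K-Gamma}'s displayed estimate holds with constants independent of $g$ is the crux that makes the bound $\lesssim g^6$ (and hence the $g^{18}$ in Theorem \ref{thm-genus-g}) legitimate.
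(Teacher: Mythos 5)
Your overall architecture matches the paper's (bound the diameter by hyperbolic trigonometry on the regular $4g$-gon, then count translates of $F$ in an enlarged disk with constants uniform in $g$), but two quantitative steps at the heart of the exponent $6$ are wrong or missing. First, the diameter estimate: you use $\diam(Y_g)\le\diam(F)\le 2R$ with $\cosh R=\cot^2(\pi/4g)\asymp g^2$, which only gives $\cosh(\diam(Y_g))\lesssim\cosh(2R)\asymp g^4$. The paper needs, and proves, the stronger bound $\cosh(\diam(Y_g))\le 2\cot^2(\pi/4g)-1\asymp g^2$, and this genuinely uses the quotient structure: since all vertices of $F$ are identified in $Y_g$ (there is $\gamma\in\Gamma_g$ with $\gamma(A)=B$), any two points $P,Q\in F$ are joined in $Y_g$ both by a path through the center $O$ and by a path through a vertex, whence $2d_{Y_g}(P,Q)\le d(O,P)+d(P,A)+d(O,Q)+d(Q,B)\le 2\bigl(d(O,D)+d(D,A)\bigr)$ with $\cosh(d(O,D))=\cot(\pi/4g)\asymp g$. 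Without this trick your $g^4$ propagates and the final exponent inflates beyond $6$ (the paper's remark after the proof shows $\cosh(\diam(Y_g))\gtrsim g^2$, so the $g^2$ bound is also sharp and cannot be recovered by a cruder route).

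Second, the counting step $\#\{\text{translates}\}\le\mathrm{area}(D)/\mathrm{area}(F)$ is lossy in hyperbolic geometry: to contain whole translates $\gamma(F)$ you must enlarge the disk by the \emph{circumradius} $d(O,A)$, which multiplies the area by $\cosh(d(O,A))\asymp g^2$, while dividing by $\mathrm{area}(F)\asymp g$ cancels only one power of $g$ --- executed honestly, your count gives $K_{Y_g}\lesssim g^7$ even with the sharp diameter bound. The paper avoids this by packing with the \emph{inscribed} disks $\mathcal{D}(\gamma(i),|OD|)$, where $\cosh|OD|=\cot(\pi/4g)\asymp g$ and $\mathrm{area}\asymp g$: the enlargement factor $\asymp g$ cancels exactly against the packing-disk area, yielding the uniform lattice bound \eqref{lattice-count-uniform-g}, $N_{\Gamma_g}(R)\le CR^2$ with $C$ absolute, and hence $K_{Y_g}\lesssim\cosh\bigl(2d(O,A)+\diam(Y_g)\bigr)\asymp g^4\cdot g^2= g^6$ directly, with no division by $\mathrm{area}(F)$ at all. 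Your own asymptotics are also internally inconsistent --- $\cosh\rho\asymp\cosh(\diam(Y_g))\cosh R$ gives $g^4$, ``$(\cosh R)^3$'' gives $g^6$, and dividing either by $\mathrm{area}(F)\asymp g$ gives $g^3$ or $g^5$, not $g^6$ --- so the ``balancing the exponent carefully'' step, which you defer, is precisely where the proof content lies and it is not supplied.
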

\begin{proof}
Let $\Gamma_g\subset G$ be the corresponding surface group.
For a standard regular geodesic $4g$-gon $F\subset\bH^2$ centered at $i$ (denote by $O$) serving as a fundamental domain of $Y_g$, we estimate its diameter as follows.

\begin{figure}[ht]
\begin{center}
\includegraphics[width=0.5\textwidth]{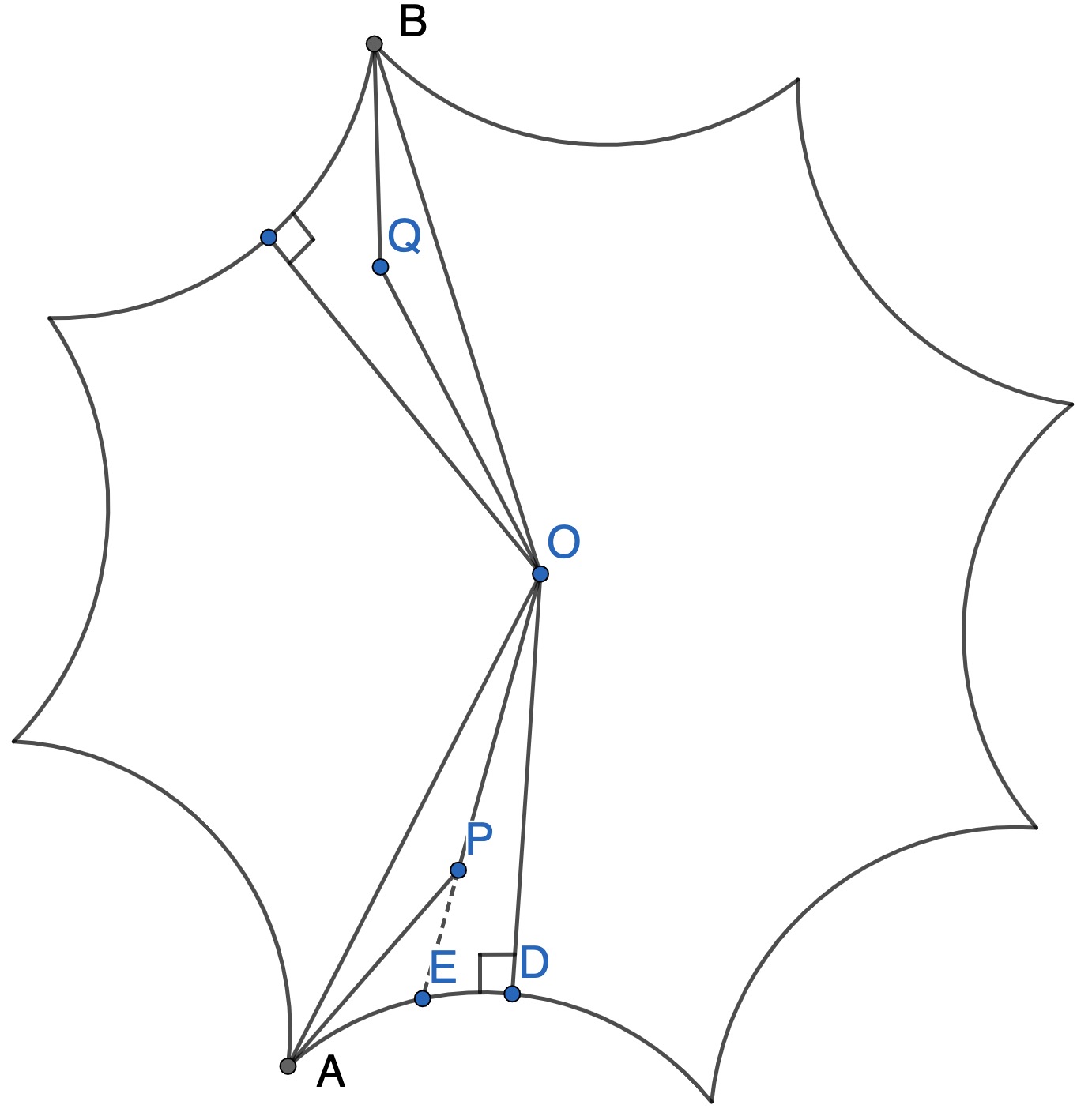}\\
\caption{Distance between $P$ and $Q$}
\label{polygon}
\end{center}
\end{figure}

First we determine a bound for $\diam(Y_g)$. For any $P, Q\in F$ (see Figure \ref{polygon}) , choose two vertices $A$ and $B$ of $F$ that are closest to $P$ and $Q$ correspondingly. Since there exists  $\gamma\in\Gamma_g$ such that $\gamma(A)=B$, we have ($d=d_{\mathbb{H}^2}$) by triangle inequality
\[d_{Y_g}(P,Q)\leq d(O,Q)+d(O,P),\ d_{Y_g}(P,Q)\leq d(P,A)+d(Q,B),\]
hence 
\[2d_{Y_g}(P,Q)\leq d(O,P)+d(P,A)+d(O,Q)+d(Q,B).\]
We claim that $d(O,P)+d(P,A)\leq d(O,D)+d(A,D)$. Indeed, if we extend the geodesic between $O$ and $P$ to $E$, by triangle inequality we have $d(O,E)=d(O,P)+d(P,E)\leq d(O,D)+d(D,E)$ and $d(P,A)\leq d(P,E)+d(E,A)$, so that
\begin{align*}
d(O,P)+d(P,A)&=d(O, E)-d(P, E)+d(P, A)\\
&\leq d(O,D)+d(D,E)-d(P,E)+d(P,E)+d(E,A)\\
&=d(O,D)+d(D,E)+d(E,A)\\
&=d(O,D)+d(A,D).
\end{align*}
Since $F$ is regular and $P,Q$ are arbitrary, we have
\[\diam(Y_g)\leq d(O,D)+d(A,D).\]
Note that  $(O,D,A)$ forms a right triangle and $\angle{AOD}=\angle{OAD}=\frac{\pi}{4g}=:\beta$, hyperbolic trigonometry gives
\begin{equation}\label{cosh-OD}
    \cosh(d(O,D))=\cosh(d(D, A))=\cot\beta,
\end{equation} thus
\[\cosh(\diam(Y_g))\leq \cosh(d(O,D)+d(D,A))=\cosh(2d(O,D))=2\cot^2(\beta)-1.\]
Also, we have 
\begin{equation}
    \cosh(d(O,A))=\cot^2(\beta).
\end{equation}

By construction of the geodesic cover $\tilde{U}$ as in the proof of Proposition \ref{prop-finite pieces of domains}, we only need to choose $\gamma\in\Gamma_g$ with $d(\gamma(i),i)\leq 2d(O,A)+\diam(Y_g)$. Since for any $\gamma=\begin{pmatrix}
a&b\\c&d\end{pmatrix}\in\PSL_2(\R)$,
\[2\cosh(d(\gamma(i),i))=\|\gamma\|^2=a^2+b^2+c^2+d^2,\]
we get
\begin{equation}\label{K-to-lattice}
K_{Y_g}\leq \#\{\gamma\in\Gamma_g: \|\gamma\|^2\leq 2\cosh(2d(O,A)+\diam(Y_g))\}.
\end{equation}
By the sum of arguments formula,
\begin{align}\label{4g-gon-radius+diam}
&\cosh(2d(O,A)+\diam(Y_g))\nonumber\\
&\quad=\cosh(2d(O,A))\cosh(\diam(Y_g))+\sinh(2d(O,A))\sinh(\diam(Y_g))\nonumber\\
&\quad=(2\cot^4(\beta)-1)(2\cot^2(\beta))-1)\nonumber\\
&\qquad+2\cot^2(\beta)\sqrt{\cot^4(\beta)-1}\sqrt{(2\cot^2(\beta)-1)^2-1}\nonumber\\
&\quad\lesssim g^4\cdot g^2+g^2\cdot g^2\cdot g^2\lesssim g^6.
\end{align}

By the result of counting hyperbolic lattices inside a circle (see \cite{Boca-Zaharescu} or \cite{Kontorovich}), we have asymptotically
\[\#\{\gamma\in\Gamma: \|\gamma\|\leq R\}\sim \frac{\pi}{\rm Area(\Gamma\backslash\bH^2)}R^2, ~\text{as}~R\rightarrow \infty.\]
This is the so-called \textit{hyperbolic circle problem} which has been widely studied in various prominent works. However, we cannot apply the above formula directly to \eqref{K-to-lattice} since we also need certain uniformity among the surfaces of genus $g$. 

Here we borrow an idea of Huber \cite{Huber-analytic} (sections 3.1 and 3.2) to show that, for $Y_g$ being standard regular with fundamental group $\Gamma_g$, 
\begin{equation}\label{lattice-count-uniform-g}
    N_{\Gamma_g}(R):=\#\{\gamma\in\Gamma_g: \|\gamma\|\leq R  \}\leq CR^2
\end{equation}
for some absolute constant $C$ independent of $g$. 

Now we prove \eqref{lattice-count-uniform-g}.  Since for any $\rm id\neq\gamma\in\Gamma_g$, it maps $F$ to another fundamental domain centered at $\gamma(i)$, the smallest distance between $i$ and $\gamma(i)$ is at least $2|OD|$ (see Figure \ref{polygon}). Let $\mathcal{D}(p, r):=\{x: d_{\bH^2}(x, p)<r \}$ be the disk of radius $r$ centered at $p$. Then the disks $\mathcal{D}(\gamma(i), |OD|)$
are disjoint for distinct $\gamma$. Thus we get 
$$\bigcup_{\|\gamma\|\leq R} \mathcal{D}(\gamma(i), |OD|)\subset \mathcal{D}(i, Q+|OD|), $$
where $Q=\acosh(R^2/2)$, which implies that
$${\rm Area}(\mathcal{D}(i, |OD|))\cdot N_{\Gamma_g}(R)=\sum_{\|\gamma\|\leq R} {\rm Area} (\mathcal{D}(\gamma(i), |OD|))\leq {\rm Area}(\mathcal{D}(i, Q+|OD|)).$$ 
By the hyperbolic area formula (\cite{Huber-analytic}, 2.10)
$${\rm Area}(\mathcal{D}(p, r))=2\pi (\cosh r-1),$$
together with \eqref{cosh-OD}, we get
$$ N_{\Gamma_g}(R)\leq \frac{\cosh(Q+|OD|)-1}{\cosh(|OD|)-1}\leq C_0 \cosh(Q)=\frac{C_0}{2}R^2   $$
for some absolute constant $C_0$ independent of $g$. 

Finally, applying \eqref{lattice-count-uniform-g} to \eqref{K-to-lattice} and by \eqref{4g-gon-radius+diam}, we deduce that 
$K_{Y_g}\lesssim g^6.$
We finish the proof.\end{proof}
\begin{remark}
Note that $\diam(Y_g)\geq d_{Y_g}(O, A)= d_{\bH^2}(O, A) $ in Figure \ref{polygon}, we have $$\cosh(\diam(Y_g))\geq \cosh(d(O, A))=\cot^2(\beta) \gtrsim g^2.$$
Thus by \eqref{4g-gon-radius+diam} the estimate of the diameter is optimal when applying the strategy of counting hyperbolic lattices. 
\end{remark}

\begin{remark}
One may also use the spectral decomposition and Weyl's law about the density of exceptional eigenvalues to get a uniform bound for the counting of hyperbolic lattices. 
\end{remark}

\subsection{Modular surface} The key to prove Proposition \ref{prop-finite pieces of domains} is that co-compact Fuchsian groups have fundamental domains of finite diameter. For other Fuchsian groups $\Gamma$ whose fundamental domain is not of finite diameter, the geodesic-covering number $K_\Gamma$ may still exist. In particular for the modular group $\PSL_2(\Z)$ we have the following result.
\begin{prop}\label{prop-distinct distance on modular surface}
For modular surface, we have $K_{\PSL_2(\Z)}\leq 10$ and the number of distinct distances among $N$ points on the modular surface $X$ is $\gtrsim N/\log N$.
\end{prop}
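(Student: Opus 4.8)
The plan is to deduce the distinct-distance bound from the estimate $K_{\PSL_2(\Z)}\le 10$ together with Theorem~\ref{thm-geodesic-to-distinct distance}: once $K_{\PSL_2(\Z)}$ is an absolute constant, that theorem gives $\gtrsim N/(K^3\log(KN))\gtrsim N/\log N$, since $\log(KN)\lesssim\log N$. So the entire task is to produce a geodesic cover of size at most $10$. I would work with the standard fundamental domain $F=\{z\colon |\Re z|\le\tfrac12,\ |z|\ge 1\}$ and take as candidate $\Gamma_0$ the set of those $\gamma$ for which $\gamma\bar F$ meets $\bar F$ in an edge or a finite (non-cusp) vertex. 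Writing $S\colon z\mapsto -1/z$ and $T\colon z\mapsto z+1$, these are the identity, the three edge-neighbours $T^{\pm1}$ and $S$, and the corner-neighbours around the two order-$3$ vertices $\rho=e^{2\pi i/3}$ and $\rho'=\rho+1$. A direct enumeration of the six copies of $F$ meeting each of $\rho,\rho'$ (the stabilizers $\langle ST\rangle$, $\langle TS\rangle$ together with the cosets carrying one order-$3$ vertex of $F$ to the other) yields exactly ten domains in total; this is the source of the constant $10$.

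Having fixed $\Gamma_0$, the claim to prove is that for all $p,q\in F$ the closest point of the orbit $\PSL_2(\Z)\,q$ to $p$ lies in $\bigcup_{\gamma\in\Gamma_0}\gamma\bar F$; equivalently, that away from the cusp the Dirichlet domain $D_p$ centred at $p$ is contained in this star. Since $\diam(X)=\infty$, the finite-diameter argument of Proposition~\ref{prop-finite pieces of domains} does not apply and the cusp must be handled by hand. The device I would use is the height estimate: for $\gamma=\left(\begin{smallmatrix}a&b\\c&d\end{smallmatrix}\right)$ one has $\Im(\gamma q)=\Im q/|cq+d|^2$, so whenever $c\ne 0$ (i.e.\ $\gamma\notin\langle T\rangle$) and $q\in F$, the inequality $|cq+d|^2\ge(\Im q)^2$ together with $\Im q\ge\sqrt3/2$ gives $\Im(\gamma q)\le 1/\Im q\le 2/\sqrt3$. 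Thus every orbit point of $q$ is either a horizontal translate $T^k q$ at the same height as $q$, or is pushed down into the compact strip $\{\Im\le 2/\sqrt3\}$.

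This dichotomy lets me split the argument. For horizontal translates, minimising $|\Re p-\Re q-k|^2$ over $k\in\Z$ with $\Re p,\Re q\in[-\tfrac12,\tfrac12]$ forces the optimal $k\in\{-1,0,1\}$, so among the $T^k q$ only $T^{-1},\id,T\in\Gamma_0$ can be minimisers. If instead the closest orbit point has $c\ne 0$, then it is low, and I would argue that $p$ itself may be taken low as well: using $d_{\bH^2}(p,\gamma q)=d_{\bH^2}(\gamma^{-1}p,q)$ and the same height estimate applied to the orbit of $p$, a high base point together with a low closest representative is impossible unless the partner is also high, which is the horizontal case already treated. Everything therefore reduces to the compact case in which both $p$ and the relevant representative of $q$ lie in $F\cap\{\Im\le 2/\sqrt3\}$.

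The remaining, and genuinely hardest, step is this compact case: to show that for $p$ in the compact core $D_p$ does not reach beyond the ten domains of the star, so that no ``second-neighbour'' translate can contain a representative of $q$ closer to $p$ than a neighbour does. I expect to establish this by a quantitative neighbour analysis at the three special points: around the order-$2$ point $i$ and the order-$3$ points $\rho,\rho'$ the star already fills a full angular neighbourhood (two, respectively six, copies of $F$), while elsewhere the edge-neighbours $T^{\pm1},S$ suffice. The content is a hyperbolic-trigonometry estimate showing that any domain two steps away from $F$ lies strictly farther from every $p\in F$ than the nearest neighbour, so it may be discarded. This uniform separation estimate over the compact core, reconciled with the cusp analysis, is the main obstacle; once it is in place the ten domains of $\Gamma_0$ satisfy \eqref{equation-geodesic cover}, giving $K_{\PSL_2(\Z)}\le 10$ and hence the stated $\gtrsim N/\log N$ bound.
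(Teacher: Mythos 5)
Your reduction to $K_{\PSL_2(\Z)}\le 10$ via Theorem \ref{thm-geodesic-to-distinct distance} is exactly right, and your candidate $\Gamma_0$ --- the star of $\bar F$, i.e.\ the identity, $T^{\pm1}$, $S$, and the remaining corner domains at the order-$3$ vertices $\rho,\rho'$, ten domains in all --- coincides with the set of ten matrices the paper ends up with. The cusp analysis via $\Im(\gamma q)=\Im q/|cq+d|^2$ and the restriction of the $c=0$ family to $k\in\{-1,0,1\}$ are also sound. But the proposal has a genuine gap, and you name it yourself: the claim that for $p$ and the relevant representative of $q$ in the compact core no ``second-neighbour'' translate can realize $d_X(p,q)$ is only announced (``I expect to establish this\ldots{} is the main obstacle''), never proved. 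That claim \emph{is} the proposition; everything else is bookkeeping. Two subsidiary steps are also left soft: (i) the asserted reduction ``$p$ itself may be taken low as well'' does not follow formally from the height estimate, because the thresholds overlap --- $q\in F$ only guarantees $\Im q>\sqrt3/2$, while $c\ne0$ pushes images only below $2/\sqrt3>\sqrt3/2$, so ``high/low'' is not a clean dichotomy and this case split needs an actual inequality; (ii) your reformulation $\bar D_p\subset\bigcup_{\gamma\in\Gamma_0}\gamma \bar F$ for \emph{every} $p\in\bar F$ is stronger than what \eqref{equation-geodesic cover} requires (one minimizer in $\Gamma_0$ suffices, ties allowed), and proving it uniformly in $p$ is harder than the statement you actually need.

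For comparison, the paper closes exactly the step you defer by abandoning the geometric star picture and computing: writing $z_j=\gamma_j(i)$ with explicit upper-triangular $\gamma_j$, one has $2\cosh d_{\bH^2}(z_1,\gamma z_2)=\|\gamma_1^{-1}\gamma\gamma_2\|^2$, an explicit expression in $a,b,c,d$ and $x_j,y_j$. The $c=0$ family gives the benchmark $U(0)=\frac{y_2}{y_1}+\frac{1}{4y_1y_2}+\frac{y_1}{y_2}$, attained with $|b|\le 1$; the bound $\|\gamma_1^{-1}\gamma\gamma_2\|^2\ge c^2y_1y_2$ together with $y_1,y_2>\sqrt3/2$ eliminates $|c|\ge 2$; and for $|c|=1$ a term-by-term comparison in $t_1=a-x_1$, $t_2=d+x_2$ forces $|a|\le 1$, $|d|\le 1$. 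That finite list of explicit inequalities is precisely the ``uniform separation estimate'' your plan postulates, carried out in coordinates where it becomes checkable, and it simultaneously disposes of the cusp (the $c=0$ analysis) without any high/low case distinction. To complete your geometric route you would have to produce the analogous quantitative comparison for each of the finitely many second-generation neighbours and repair point (i); as it stands, the proposal sets up the correct framework and the correct constant $10$, but does not prove the theorem.
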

\begin{proof}
Let $F$ be the standard fundamental domain
\[F:=\{z\in\bH^2\mid -\dfrac{1}{2}<\Re(z)<\dfrac{1}{2}, |z|>1\}.\]
For any $z_1=x_1+y_1i,z_2=x_2+y_2i\in F$, it is immediately to verify the M\"{o}bius transformation
\[z_j=\begin{pmatrix}
\sqrt{y_j}&\dfrac{x_j}{\sqrt{y_j}}\\
0&\dfrac{1}{\sqrt{y_j}}\end{pmatrix}\cdot i=\gamma_j(i),\ j=1,2.
\]
Then for each $\gamma\in\SL_2(\Z)$ we get
\[2\cosh(d_{\bH^2}(z_1,\gamma(z_2)))=2\cosh(d_{\bH^2}(i,\gamma_1^{-1}\gamma\gamma_2(i)))=\|\gamma_1^{-1}\gamma\gamma_2\|^2.\]
%Here for each $\gamma=\begin{pmatrix}
%a&b\\c&d\end{pmatrix}\in\PSL_2(\R)$,
%\[||\gamma||^2=a^2+b^2+c^2+d^2.\]
Note that $\cosh(x)$ is monotonic for $x>0$, we see that
\[d_X(z_1,z_2)=\acosh(\min_{\gamma\in\SL_2(\Z)}\|\gamma_1^{-1}\gamma\gamma_2\|/2).\]
By computation,
\[\gamma_1^{-1}\gamma\gamma_2=\begin{pmatrix}\sqrt{\dfrac{y_2}{y_1}}(a-x_1c)&\dfrac{x_2a+b-x_1x_2c-x_1d}{\sqrt{y_1y_2}}\\
\sqrt{y_1y_2}c&\sqrt{\dfrac{y_1}{y_2}}(x_2c+d)
\end{pmatrix},\]
whence
\begin{align*}\|\gamma_1^{-1}\gamma\gamma_2\|^2=&\dfrac{y_2}{y_1}(a-x_1c)^2+\dfrac{1}{y_1y_2}(x_2a+b-x_1x_2c-x_1d)^2+y_1y_2c^2+\dfrac{y_1}{y_2}(x_2c+d)^2.
\end{align*}
If $c=0$, then $\gamma=\begin{pmatrix}\pm 1&b\\0&\pm 1\end{pmatrix}$ and
\[\|\gamma_1^{-1}\gamma\gamma_2\|^2=\dfrac{y_2}{y_1}+\dfrac{1}{y_1y_2}(x_2-x_1\pm b)^2+\dfrac{y_1}{y_2}.\]
Note that $-\dfrac{1}{2}<x_1,x_2<\dfrac{1}{2}$, the module $\|\gamma_1^{-1}\gamma\gamma_2\|^2$ attains minimum at $|b|\leq 1$ whose value is $<\dfrac{y_2}{y_1}+\dfrac{1}{4y_1y_2}+\dfrac{y_1}{y_2}:=U(0)$.

If $c\neq 0$, we have $\dfrac{a}{c}\dfrac{d}{c}-\dfrac{1}{c^2}=\dfrac{b}{c}$ and
\begin{align}\label{equation-c not 0}
    &\|\gamma_1^{-1}\gamma\gamma_2\|^2\\ \nonumber
   =&c^2\left[\dfrac{y_2}{y_1}\left(\dfrac{a}{c}-x_1\right)^2+\dfrac{1}{y_1y_2}\left(\dfrac{a}{c}x_2+\dfrac{b}{c}-x_1x_2-x_1\dfrac{d}{c}\right)^2+y_1y_2+\dfrac{y_1}{y_2}\left(\dfrac{d}{c}+x_2\right)^2\right]\\\notag
   =&c^2\left[\dfrac{y_2}{y_1}\left(\dfrac{a}{c}-x_1\right)^2+\dfrac{1}{y_1y_2}\left(\left(\dfrac{a}{c}-x_1\right)\left(\dfrac{d}{c}+x_2\right)-\dfrac{1}{c^2}\right)^2+y_1y_2+\dfrac{y_1}{y_2}\left(\dfrac{d}{c}+x_2\right)^2\right]\\\notag
   \geq& c^2y_1y_2\notag.
\end{align}
Comparing it with $U(0)$ and note that $-1/2<x_2,x_1<1/2$ we get
\begin{align*}\|\gamma_1^{-1}\gamma\gamma_2\|^2-U(0)&\geq c^2y_1y_2-\dfrac{y_2}{y_1}-\dfrac{1}{4y_1y_2}-\dfrac{y_1}{y_2}\\
 &> \dfrac{c^2y_1^2y_2^2-y_1^2-y_2^2-\dfrac{1}{4}}{y_1y_2}\\
 &=\dfrac{\left(|c|y_1^2-\dfrac{1}{|c|}\right)\left(|c|y_2^2-\dfrac{1}{|c|}\right)-\dfrac{1}{c^2}-\dfrac{1}{4}}{y_1y_2}.
\end{align*}
For $|c|\geq 2$ we have
\begin{align*}\|\gamma_1^{-1}\gamma\gamma_2\|^2-U(0)&\geq\dfrac{\left(2y_1^2-\dfrac{1}{2}\right)\left(2y_2^2-\dfrac{1}{2}\right)-\dfrac{1}{2}}{y_1y_2}\\
&>\dfrac{\left(\dfrac{3}{2}-\dfrac{1}{2}\right)^2-\dfrac{1}{2}}{y_1y_2}>0,\end{align*}
since $y_j>\sqrt{3}/{2}, j=1,2$. Thus in order to choose for $\Gamma_0$ as in (\ref{equation-geodesic cover}), we only need $\gamma\in\PSL_2(\Z)$ with $|c|\leq 1$.

For $|c|=1$, we have $ad\pm b=1$ (so that $a$ and $d$ can be chosen arbitrarily). We claim that in this case, (\ref{equation-c not 0})
attains minimum when $|a|\leq 1, |d|\leq 1$. By choosing $\gamma\in\SL_2(\Z)/\{\pm1\}$ we may assume $c=1$. Let $t_1=a-x_1$ and $t_2=d+x_2$, then (\ref{equation-c not 0}) becomes
\begin{equation}\label{equation-c=1}
\|\gamma_1^{-1}\gamma\gamma_2\|^2=\dfrac{y_2}{y_1}t_1^2+\dfrac{1}{y_1y_2}\left(t_1t_2-1\right)^2+\dfrac{y_1}{y_2}t_2^2+y_1y_2.
\end{equation}
We prove the claim by refuting the contradictory cases: (i) if $c=1, |a|\geq 2, |d|\geq 2$, note that $|x_1|\leq 1/2, |x_2|\leq 1/2$, then $|t_1|\geq 3/2, |t_2|\geq 3/2$ and (\ref{equation-c=1}) becomes
\[\|\gamma_1^{-1}\gamma\gamma_2\|^2\geq \dfrac{y_2}{y_1}\cdot\dfrac{9}{4}+\dfrac{1}{y_1y_2}\cdot\dfrac{25}{16}+\dfrac{y_1}{y_2}\cdot\dfrac{9}{4}+y_1y_2>U(0);\]
(ii) if $c=1, |a|\leq 1, |d|\geq 2$, then $|t_2|\geq 3/2$ and we take the difference ($t_1$ or $a$ fixed) 
\begin{align*}
    &\min_{c=1, |d|\geq 2}\|\gamma_1^{-1}\gamma\gamma_2\|^2-\min_{c=1, |d|\leq 1}\|\gamma_1^{-1}\gamma\gamma_2\|^2\\
    \geq&\dfrac{1}{y_1y_2}(|t_1|\cdot\dfrac{3}{2}-1)^2+\dfrac{y_1}{y_2}\cdot\dfrac{9}{4}-\min_{|d|\leq 1}\left\{\dfrac{1}{y_1y_2}(|t_1||d+x_2|-1)^2+\dfrac{y_1}{y_2}(d+x_2)^2\right\}\\
    \geq&\min_{|d|\leq 1}\left\{\dfrac{1}{y_1y_2}\left[\left(\dfrac{9}{4}-(d+x_2)^2\right)t_1^2+(2|d+x_2|-3)|t_1|\right]+\dfrac{y_1}{y_2}\cdot\left(\dfrac{9}{4}-(d+x_2)^2\right)\right\}\\
    \geq&\min_{|d|\leq 1}\left\{\dfrac{1}{y_1y_2}\left[2t_1^2+(2|d+x_2|-3)|t_1|+2y_1^2\right]\right\}\\
    \geq&\dfrac{1}{y_1y_2}\left[2\left(|t_1|-\dfrac{3}{4}\right)^2-\dfrac{9}{8}+2\cdot\dfrac{3}{4}\right]>0,
\end{align*}
noting that $|y_1|\geq\sqrt{3}/2$; (iii) for $c=1, |a|\geq 2, |d|\leq 1$, the above difference (for $t_2$ fixed) stays positive if symmetrically the roles of $t_1,d$ are replaced by $t_2,a$. Thus the claim is proved.

In conclusion, we may choose $\Gamma_0\subset\PSL_2(\Z)$ consisting of 
\[1,\begin{pmatrix}1&\pm 1\\0&1\end{pmatrix},\begin{pmatrix}1&0\\\pm1&1\end{pmatrix},\begin{pmatrix}0&-1\\1&\pm1\end{pmatrix},\begin{pmatrix}\pm1&- 1\\1&0\end{pmatrix},\begin{pmatrix}0&-1\\1&0\end{pmatrix}.\]
Thus $K_{\PSL_2(\Z)}\leq 10$. Then by Theorem \ref{thm-geodesic-to-distinct distance} we get the desired lower bound for distinct distances on modular surface. 
\end{proof}
Here the geodesic cover $\cup_{\gamma\in\Gamma_0}\gamma(\F)$ is $F$ together with the nine neighbouring fundamental domains on $\bH^2$. Actually we may only choose the geodesic cover in the sense of (\ref{equation-radical geodesic cover}) as $$\Gamma_1=\left\{1,\begin{pmatrix}1& 1\\0&1\end{pmatrix},\begin{pmatrix}1&0\\1&1\end{pmatrix},\begin{pmatrix}0&-1\\1&0\end{pmatrix}\right\}$$ 
since $\Gamma_1^{-1}\Gamma_1=\Gamma_0$.

%The incidence geometry we use resembles that of the Euclidean case but requires more careful treatment. 

\section{Distinct distances between two sets in hyperbolic surfaces}
This section contributes to the proof of Theorem \ref{thm-cross distances}. 

Let $P_1, P_2\subset Y$ be finite sets in a hyperbolic surface $Y$ with geodesic-covering number $K_Y<\infty$. Choose a geodesic cover $\Gamma_0\subset\Gamma_Y$ for the associated Fuchsian group $\Gamma_Y$ of $Y$ and $|\Gamma_0|=K_Y$, and duplicate $P_1,P_2$ to be $\tilde{P_j}=\cup_{\gamma\in\Gamma_0}\gamma\cdot P_j, j=1,2$. Define 
\[d_Y(P_1, P_2):=\{d_{Y}(p_1,p_2): p_1\in P_1, p_2\in P_2\},\]
\[Q_Y(P_1, P_2):=\{(p_1,p_2; q_1,q_2): p_1,q_1\in P_1, p_2,q_2\in P_2, d_{Y}(p_1,p_2)=d_{Y}(q_1,q_2)\neq 0\}.\]
and 
\[Q_{\bH^2}(\tilde{P_1}, \tilde{P_2}):=\{(p_1,p_2; q_1,q_2): p_1,q_1\in \tilde{P_1}, p_2,q_2\in \tilde{P_2}, d_{\bH^2}(p_1,p_2)=d_{\bH^2}(q_1,q_2)\neq 0\}.\]
Certainly $Q_Y(P_1, P_2)\subset Q_{\bH^2}(\tilde{P_1},\tilde{P_2})$.
Suppose $d_Y({P_1},{P_2})=\{d_1,\ldots,d_m\}$ and $n_k$ is the number of pairs $(p_1,p_2)$ for $p_1\in{P_1}, p_2\in {P_2}$ with $d_{Y}(p_1,p_2)=d_k$. We see that
$|{P_1}||{P_2}|-|P_1\cap {P_2}|=\sum_{k=1}^mn_k$ and $|Q_Y({P_1},{P_2})|=\sum_{k=1}^mn_k^2$. Then by the Cauchy-Schwarz inequality we get
\[|P_1|^2|P_2|^2\lesssim ( |{P_1}||{P_2}|-|{P_1}\cap {P_2}|)^2\leq  m\sum_{k=1}^mn_k^2=m|Q_Y({P_1},{P_2})|.\]
By Lemma \ref{lem-Quadruple}, we have
\[|Q_Y(P_1,P_2)|\lesssim |Q(\tilde{P_1}\cup\tilde{P_2})|\lesssim K^3_Y |P_1\cup P_2|^3\log (K_Y| P_1\cup P_2|),\]
where $Q(P)$ is defined in \eqref{defn-Q(P)}, and consequently
\[|d_Y(P_1,P_2)|\gtrsim_Y \dfrac{|P_1|^2|P_2|^2}{|P_1\cup P_2|^3\log|P_1\cup P_2|}.\]
This finishes the proof of Theorem \ref{thm-cross distances}.

We may replace $|P_1\cup P_2|$ by $\max\{|P_1|,|P_2|\}$ in the above inequality. If $|P_1|^2\leq|P_2|$, the inequality gives a trivial lower bound.

\end{document}